\documentclass{siamart251104}
\usepackage{makeidx}        
\usepackage{graphicx}       
\usepackage{multicol}       
\usepackage[bottom]{footmisc}

\usepackage{newtxtext}       %
\usepackage[varvw]{newtxmath}       

\usepackage{amsmath}
\usepackage{float}
\usepackage{diagbox}
\usepackage{bbold}
\usepackage{hyperref}
\usepackage{enumitem}
\usepackage{graphicx} 
\usepackage{caption} 
\usepackage{subcaption} 
\usepackage{rotating} 

\newtheorem{thm}{Theorem}

\newtheorem{lem}{Lemma}

\newtheorem{rmk}{Remark}

\makeindex

\newcommand{\dx}{\Delta x}
\newcommand{\dt}{\Delta t}

\newcommand{\mI}{\m{I}}
\newcommand{\mA}{\m{A}}
\newcommand{\mB}{\m{B}}

\newcommand{\mAh}{\m{\hat{A}}}

\newcommand{\cU}{{\mathcal{U}}}

\newcommand{\Aep}{\mA^{\epsilon} }
\newcommand{\bep}{b^{\epsilon} }
\newcommand{\cep}{\vc^{\epsilon}}

\newcommand{\At}{\tilde{\mA} }

\newcommand{\m}[1]{\mathbf{#1}}

\newcommand{\vy}{\m{y}}
\newcommand{\vz}{\m{z}}
\newcommand{\vb}{\m{b}}

\newcommand{\ve}{\m{e}}
\newcommand{\vE}{\m{E}}
\newcommand{\vc}{\m{c}}

\title{Smooth perturbations of diagonally implicit Runge--Kutta methods}

\author{John Driscoll\thanks{Mathematics Department, University of Massachusetts Dartmouth,
285 Old Westport Road, North Dartmouth, MA 02747.}\and Sigal Gottlieb*\thanks{sgottlieb@umassd.edu}\and
Zachary J. Grant*\and
César Herrera\thanks{Department of Mathematics, Purdue University, 150 North University Street.
West Lafayette, Indiana 47907}\and
Tej Sai Kakumanu*\and
Monica Stephens\thanks{Mathematics Department, Spelman College, 350 Spelman Lane S.W.  Atlanta, GA 30314} 
}

\begin{document}

\maketitle

\bibliographystyle{siam}

\noindent{\bf Abstract:}
{\em A mixed accuracy framework for Runge–Kutta methods  
presented in \cite{Grant2022} has been shown to speed up the computation 
in diagonally implicit Runge–Kutta (DIRK) methods by using
less expensive low accuracy approaches for the implicit stages.
This theory included both smooth and nonsmooth perturbations,
and subsequent work focused primarily on the case of nonsmooth 
perturbations that arise from mixed precision simulations.
In this work the focus is on  smooth perturbations that arise  
from  using  less accurate models or under-resolved iterative solvers
to simplify the implicit computations. 
We develop an accuracy and stability analysis 
based on the framework in \cite{Grant2022} to design 
methods that strategically replace the original operator by a lower accuracy operator to
reduce computational cost while mitigating the effect of the perturbations. 
In particular, we focus on designing novel methods that are high order for 
smooth  perturbations  that satisfy additional local consistency conditions. 
Finally, we verify the performance of  the novel perturbed DIRK methods designed in this work
and  numerically study the impact of different  types of smooth perturbations 
on the accuracy and stability of the methods.
}

\section{Overview}

We begin with an evolutionary partial differential equation (PDE)
\begin{eqnarray} \label{PDE}
    \cU_t  &=& F(\cU,\cU_x, \cU_{xx}), \; \; \; \cU(0,x) = \cU_0(x),
\end{eqnarray} 
and discretize it in space,  resulting in 
an autonomous  ordinary differential equation (ODE)
\begin{eqnarray} \label{ODE}
    u_t &=& f(u), \; \; \; y(0) = y_0.
\end{eqnarray} 
This ODE can then be  evolved  forward using a standard  Runge--Kutta  method,
e.g. a diagonally  implicit  Runge--Kutta  (DIRK) method of the form:
\begin{eqnarray} \label{DIRK}
    u^{(i)} & =  u_n + \dt \sum_{j=1}^{i} a_{ij} f(u^{(j)}),   \; \; \; \; 
    u_{n+1} & =  u_n + \dt \sum_{i=1}^s b_{i} f(u^{(i)}) .
\end{eqnarray}
The  coefficients $a_{ij}$ and $b_i$ are written as a matrix $\mA = (a_{ij})$, 
and a vector $\vb = (b_i)$, and must  satisfy the conditions
\begin{subequations} \label{Coefficients}
\begin{eqnarray} 
    a_{ii} & \geq & 0, \; \; \; b_i  >  0 ,\; \; \; 
    c_i =   \sum_j a_{ij} \; \; \; \mbox{are distinct} \\
    M & = &  \mB \mA + \mA^T \mB - \vb \vb^T 
    \; \; \; \mbox{is SPD} .
\end{eqnarray} 
\end{subequations}
If $f \in C^1(a,b)$  is a contractive function so that 
$ \left( z -  y , f(z) -  f(y) \right)  \leq 0, \; \; \; \forall \;  y,z,$
the conditions \eqref{Coefficients} ensure that the method \eqref{DIRK}  satisfies an
inner-product  stability property known as B-stability \cite{CrouzeixBstab}.

For computational efficiency, we can replace $f(u^{(i)})$ 
with  $f_{\varepsilon}(u^{(i)})$  that is less expensive to invert:
\begin{align}  \label{MADIRK}
u^{(i)}&= u_n + \dt \sum_{j=1}^{i} \tilde{a}_{ij} f(u^{(j)}) 
+   \dt \sum_{j=1}^{i} a^\varepsilon_{ij} f_\varepsilon(u^{(j)}) \nonumber \\
u_{n+1}&= u_n + \dt \sum_{i=1}^{s} {b}_{i} f(u^{(i)})    
\end{align}
Here we let $\tilde{a}_{ij} = a_{ij} - a^\varepsilon_{ij}$.
Correspondingly, we define the matrix $\At = \mA - \Aep$.

A  mixed accuracy framework for Runge--Kutta methods was presented in \cite{Grant2022}
for both the smooth and non-smooth perturbation $f_\varepsilon$.
In prior work \cite{Grant2022, Burnett1, Burnett2, Driscoll2026}
this approach was applied to design diagonally perturbed
DIRK methods of the form \eqref{MADIRK}  where $\Aep$ is a diagonal matrix. 
In this case, the operator $f_\varepsilon$ was a low accuracy or low precision version of $f$,
which resulted in a less expensive implicit solve at each stage.
In \cite{Driscoll2026}, we focused on understanding the effect of 
non-smooth perturbation errors on the stability of the diagonally perturbed DIRK methods, 
using the  mixed-accuracy approach in \cite{Grant2022}. 
We also designed stabilized  correction approaches that allowed us to improve 
the accuracy and stability of the perturbed DIRK methods in the case of 
smooth or non-smooth perturbations.

In this work we take advantage of the flexibility afforded when the perturbation 
results from such scenarios as linearization, less accurate spatial discretizations, 
and under-resolved iterative solvers, and so is smooth.
We show that when $f_\varepsilon$ is a {\bf smooth} perturbation of $f$, 
and in addition the error from this perturbation satisfies certain consistency 
conditions, then the order conditions simplify considerably. 
In this work we investigate methods of the form \eqref{MADIRK} where $\Aep$ is 
not required to be strictly diagonal. 
We use  the perturbed B-series framework presented in \cite{Grant2022}
as well as additional simplifying conditions on the perturbation error
to develop appropriate order conditions (Section \ref{sec:MADIRK})
and design novel perturbed DIRK methods  that are stable and highly accurate
(Section \ref{sec:NovelMethods}). 
Our novel perturbed DIRK methods retain high accuracy solutions by canceling 
out the perturbations that arise  from replacing $f$ with $f_\varepsilon$.
In Section  \ref{sec:NumericalTests}  we study these new methods and compare 
them to the diagonally perturbed methods on a variety  of test cases.
Although the framework and novel methods in this work are designed for 
{\em any} smooth perturbations that satisfy
some additional consistency conditions, 
in our numerical tests we focus specifically on  linearizations of $f$.  
We show that the qualities of each linearization as well as the 
value that we  linearize around will strongly impact the accuracy of the 
overall method. 



\section{Mixed accuracy DIRK methods for smooth perturbations} \label{sec:MADIRK}


The additive DIRK scheme \eqref{MADIRK} evolves the PDE \eqref{PDE} forward using both the 
operator $f$ and $f_\varepsilon$. For analysis, it is convenient to introduce the
perturbation error $\tau = f_\varepsilon  - f$,  so that \eqref{MADIRK} can be re-written in
the perturbed form
\begin{align} \label{eq:PerturbedMethod}
u^{(i)}&= u_n + \dt \sum_{j=1}^{i} a_{ij} f(u^{(j)}) +   
\dt \sum_{j=1}^{i} a^\varepsilon_{ij} \tau(u^{(j)}) \nonumber \\
u_{n+1}&= u_n + \dt \sum_{i=1}^{s} b_{i} f(u^{(i)}) .
\end{align}

In \cite{Grant2022} the order conditions were derived for smooth 
as well as non-smooth perturbations.
In this work we consider only smooth perturbations, and additionally 
simplify the order conditions in  \cite{Grant2022} by  requiring 
that the perturbed function has the property
\begin{eqnarray} \label{eq:Tau}
    f_\varepsilon(u_n) = f(u_n) \; \; \Rightarrow \; \; \tau(u_n) = f_\varepsilon(u_n) - f(u_n) =0 .
\end{eqnarray}
This property is natural in the context of a linearization, where we can expect that 
the linearization of  a function around the point $u_n$ recovers the correct value at $u_n$. 
Note that if we do not linearize around $u_n$ we will lose this property.

Observe that in the form \eqref{eq:PerturbedMethod} we avoid using 
the perturbed operator $f_\varepsilon$ 
in the final reconstruction stage. If we did not do so, we would 
have coefficients $\bep$ (similar to the $\Aep$) in the final stage,
but we set these to zero:
\begin{eqnarray} \label{eq:bep}
\bep_i = 0 , \; \; \;  \forall \; \; i.
\end{eqnarray}
The smoothness of the perturbation coupled with the local consistency
condition \eqref{eq:Tau} and the property \eqref{eq:bep}  significantly 
simplify the order conditions, given in Table \ref{tab:OC} below.
\begin{table}[H]  
\centering
\begin{tabular}{|lllll|} \hline
type & error & expansion       & order  & note \\ 
& term &  coefficient      &  condition &  \\ \hline
M & $\dt$    & $f$             & $\vb \ve = 1$                           & condition on the method \\
M & $\dt^2$  & $f_u f$         & $\vb \vc = \frac{1}{2}$                 & condition on the method  \\
P & $\dt^2$  & $f_u \tau $     & $\vb \cep = 0 $                         & vanishes if $\tau(u_n) = 0$\\ 
\hline
M & $\dt^3$  & $f_u f_u f $    & $\vb \mA \vc = \frac{1}{6}$             & condition on the method \\ 
M & $\dt^3$  & $f_{uu} (f,f) $ & $\vb (\vc \cdot \vc) =\frac{1}{3} $     & condition on the method \\ 
P & $\dt^3$  & $f_u \tau_u f $ & $\vb \Aep \vc = 0 $                     & vanishes if $\tau_u(u_n) = 0$ \\ \hline 
M & $\dt^4$  & $f_u f_u f_u f $   & $\vb \mA  \mA \vc = \frac{1}{24}$       & condition on the method \\ 
M & $\dt^4$  & $f_u f_{uu} (f,f)$ & $\vb \mA (\vc \cdot \vc) = \frac{1}{12}$ & condition on the method  \\
M & $\dt^4$  & $f_{uu} (f_u f,f)$ & $\vb (\mA \vc \cdot \vc) = \frac{1}{8}$ & condition on the method  \\
M & $\dt^4$  & $f_{uuu}(f,f,f)$   & $\vb (\vc \cdot \vc \cdot \vc) = \frac{1}{4}$ & condition on the method  \\
P &  $\dt^4$  & $f_u f_u \tau_u f$ & $\vb \mA \Aep \vc = 0$ & vanishes if $\tau_u(u_n) = 0$  \\
P & $\dt^4$  & $f_u \tau_u f_u f$ & $\vb \Aep  \mA \vc = 0$ & vanishes if $\tau_u(u_n) = 0$  \\
P & $\dt^4$  & $f_u \tau_u \tau_u f$ & $\vb \Aep \Aep \vc = 0$ & vanishes if $\tau_u(u_n) = 0$  \\
P & $\dt^4$  & $f_{uu} (f,\tau_u f)$ & $\vb (\vc \cdot \Aep \vc) = 0$ & vanishes if $\tau_u(u_n) = 0$ \\
P & $\dt^4$  & $f_u \tau_{uu} (f,f)$ & $\vb \Aep (\vc \cdot \vc) = 0$ & perturbation condition \\ \hline
\end{tabular}
\caption{Order conditions of the perturbed DIRK method \ref{eq:PerturbedMethod}, up to fourth order. 
These conditions assume that conditions \eqref{eq:bep} and \eqref{eq:Tau} are both satisfied.
\label{tab:OC}}
\end{table} 

\smallskip

In Table \ref{tab:OC} we observe the  two types of conditions: 
the first type  (marked with "M" in the left column) are 
the  conditions on the method coefficients $\mA, \vb$, which multiply terms coming only from $f$, 
and that need to be satisfied for the method to give the correct 
accuracy in the absence of the perturbation.
The second type of conditions (marked "P" in the left column) 
are those that reduce the impact of the perturbation.
These come from the terms involving the derivatives of $\tau$ at $u_n$.
Recall that we require that $\tau(u_n)=0$, which reduces the number of such conditions
(we list in the table only the first one). If we additionally require that the derivative 
satisfies a  local consistency condition
\begin{eqnarray} \label{eq:TauPrime}
\tau_u(u_n) = 0 
\end{eqnarray} 
then we eliminate many more perturbation conditions.

In Table \ref{tab:OC}  we see that if the perturbation  satisfies the condition \eqref{eq:Tau}  
(e.g. we linearize around the  point $ u_n$) then methods of the form \eqref{MADIRK} 
give us a second order  as long as the method's order conditions are satisfied, 
and we do not require  any additional conditions on the perturbation coefficients. 
If the method's coefficients $\mA, \vb $ satisfy the third order conditions 
and the perturbation satisfies  \eqref{eq:Tau}, then one additional perturbation condition
$\vb \Aep \vc = 0 $  will allow third order convergence. 
Notably, this condition  too vanishes if the perturbation is chosen so 
that \eqref{eq:TauPrime} is also satisfied.
Finally, if a method's  coefficients $\mA, \vb $ satisfy the fourth order conditions
and the perturbation satisfies \eqref{eq:Tau} and \eqref{eq:TauPrime}, the 
we are able to get {\em fourth order}  convergence with only one additional condition: 
$\vb \Aep (\vc \cdot \vc) = 0$.
However, if the perturbation does not satisfy $\tau_u(u_n) \neq 0$ then we require 
$\vb \Aep \vc = 0 $ for third order, and  five additional conditions for fourth order.

The local consistency conditions \eqref{eq:Tau} and \eqref{eq:TauPrime} can be satisfied 
by many smooth perturbations. They are particularly natural in the setting of linearization.
In the table below we show some examples of linearizations around $u_n$, 
(note that we define $U_n$  as the  matrix with diagonal $u_n$, and that 
). 
we  observe that  linearization (1) satisfy the first local consistency condition
\eqref{eq:Tau}, but not the second local consistency condition \eqref{eq:TauPrime}.
The linearizations in (2) may not satisfy either local consistency condition, depending on the 
behavior of the differentiation matrix. The Taylor series linearization (3) will satisfy
both local consistency condition \eqref{eq:Tau} and \eqref{eq:TauPrime}.
A discussion is in Appendix A.2. \\

{\small
\begin{tabular}{|l|l|l|} \hline
 & $\cU_t =  (\cU^m)_x$ & $\cU_t =  (\cU^m)_{xx}$ \\ \hline
 1 & $D_x(U_n^{m-1} u )$  & $D_{xx}(U_n^{m-1} u)$ \\ \hline
 2a & $m U_n^{m-1}   D_x u$ & $ D_x ( m U_n^{m-1} D_x u ) $ \\
  2b & &   $m (m-1) U_n^{m-2} \left( D_x u_n \right) \odot \left( D_x u \right)
  + m U_n^{m-1}  D_{xx} u$ \\ \hline
3 &  $D_x(u_n^m) + D_x m U_n^{m-1} (u-u_n)$ &  $ D_{xx}(u_n^m) + D_{xx} m U_n^{m-1} (u-u_n)$ \\ \hline
    \end{tabular}
}







\subsection{Stability analysis for smooth perturbations} \label{sec:Stability}


To understand the impact of the perturbation resulting from replacing 
$f$ by $f^\epsilon$,  let $z_n$ be the numerical solution from the unperturbed DIRK method
\eqref{DIRK} that satisfies the conditions \eqref{Coefficients}:
\begin{eqnarray*} 
    z^{(i)} & =  z_n +  \dt  \sum_{j=1}^{i} a_{ij} f(z^{(j)}), \; \; \;
    z_{n+1}  &=  z_n + \dt \sum_{i=1}^s b_{i} f(z^{(i)}) ,
\end{eqnarray*}
and let $y_n$ be the numerical solution from the perturbed DIRK method \eqref{eq:PerturbedMethod}
\begin{eqnarray*} 
    y^{(i)}  & =  y_n +  \dt  \sum_{j=1}^{i}  \left( a_{ij} f(y^{(j)})
    + a^\varepsilon_{ij} \tau(y^{(j)}) \right),  \; \; \; 
    y_{n+1} & =  y_n + \dt \sum_{i=1}^s b_{i} f(y^{(i)}) ,
\end{eqnarray*}
where $ \tau(u) =  f_{\varepsilon}(u) - f(u) $.


The difference between these two numerical  \eqref{DIRK} and \eqref{MADIRK} 
can be written as a method of the form
\begin{eqnarray*} \label{errors}
 E^{(i)} & =  E_n +  \sum_{j=1}^{i} a_{ij}  \psi^{(j)}
- \dt   \sum_{j=1}^{i} a^\varepsilon_{ij} \tau(y^{(j)}) ,  \; \;  \;
 E_{n+1}  & =  E_n +  \sum_{i=1}^s b_{i} \psi^{(i)}.
\end{eqnarray*}
where
$ E^{(i)} = z^{(i)} - y^{(i)},\; \; \; E_n =z_n - y_n, \; \; \; \psi^{(j)} = \dt \left( f(z^{(i)}) - f(y^{(i)})\right) .$
Correspondingly, we allow  $\vE = \vz - \vy $ and $\Psi $ is the vector of $\psi^{(j)}$.
The following lemmas and theorem  express the impact of the perturbed methods
on the final time errors, which provides an understanding of the stability of this process.
Note that we use the convention that $|\cdot|$ on a matrix or vector is component-wise
absolute value. Also,  while the proofs use a scalar  ODE for simplicity,
the extension to a system is simple (though messy) with the use of Kronecker products.
The proofs follow closely those in \cite{Driscoll2026}, 
and are given Appendix A.1.

\begin{lem} \label{lem0}
    Given an ODE of the form \eqref{ODE} where $f$ is contractive, 
    and a perturbed method of the form \eqref{eq:PerturbedMethod} 
    where the coefficients $\mA= \Aep + \tilde{\mA}$ and $\vb$
    satisfy \eqref{Coefficients}, and 
    we have $z_n$ and $y_n$ coming from the methods above, we can bound the growth
    of the errors at each time level:
\[ \| E_{n+1}\|^2 \leq  \| E_n\|^2   + 2 \dt \left\| \vb^T  \Aep    \Psi  \tau(\vy)   \right\|
 \]
\end{lem}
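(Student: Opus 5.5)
We follow the standard B-stability argument of Burrage--Butcher and Crouzeix, in the scalar setting, with the perturbation carried along as an extra source term. Write the stage-error relation in vector form as
\[
\vE = E_n \ve + \mA \Psi - \dt\, \Aep \tau(\vy), \qquad E_{n+1} = E_n + \vb^T \Psi ,
\]
and square the update:
\[
E_{n+1}^2 = E_n^2 + 2 E_n \vb^T \Psi + \Psi^T \vb \vb^T \Psi .
\]
The plan is to replace $E_n$ in the cross term by the stage errors. The stage relation gives $E_n \ve = \vE - \mA \Psi + \dt\,\Aep \tau(\vy)$. Substituting,
\[
2 E_n \vb^T \Psi = 2 \Psi^T \mB \vE - 2 \Psi^T \mB \mA \Psi + 2\dt\, \Psi^T \mB \Aep \tau(\vy),
\]
where $\mB = \mathrm{diag}(\vb)$ and we use $E_n \vb^T\Psi = \Psi^T \mB (E_n\ve)$. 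Since $\Psi^T \mB \mA \Psi$ is a scalar, we symmetrize it as $\tfrac12 \Psi^T(\mB\mA + \mA^T\mB)\Psi$. This gives the identity
\[
E_{n+1}^2 = E_n^2 + 2\sum_i b_i \psi^{(i)} E^{(i)} - \Psi^T M \Psi + 2\dt\, \vb^T \Aep\, \mathrm{diag}(\Psi)\, \tau(\vy),
\]
with $M = \mB\mA + \mA^T\mB - \vb\vb^T$. The last term is exactly the quantity appearing in the lemma, interpreting $\Psi\tau(\vy)$ componentwise.

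Next we sign the first two added terms. Each $b_i>0$, and
\[
\psi^{(i)} E^{(i)} = \dt \bigl(f(z^{(i)}) - f(y^{(i)}),\, z^{(i)} - y^{(i)}\bigr) \le 0
\]
by contractivity of $f$. By \eqref{Coefficients}, $M$ is SPD, so $-\Psi^T M \Psi \le 0$. Dropping both terms and bounding the remaining term by its absolute value yields
\[
\|E_{n+1}\|^2 \le \|E_n\|^2 + 2\dt \left\| \vb^T \Aep \Psi \tau(\vy) \right\| .
\]
For systems, the same computation goes through with $\mA\otimes I$, $\mB\otimes I$ and inner products in place of products.

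The main obstacle is the bookkeeping in the cross term: we must make sure the $\Aep$ contribution enters through $E_n$ alone, as $\Psi^T\mB\Aep\tau(\vy)$. The stage errors carry only the full $\mA$ (not $\At$), and it is this that lets the $\mA$-quadratic form combine with $\vb\vb^T$ into $M$. Note that only the full matrix $\mA=\At+\Aep$ needs to satisfy the algebraic stability condition, since $\tau$ has been separated out. A second point is interpreting the product $\Psi\tau(\vy)$ in the statement as the componentwise pairing $\sum_{i,j} b_i\psi^{(i)}a^\varepsilon_{ij}\tau(y^{(j)})$; this is the form the derivation naturally produces, so we will state it explicitly.
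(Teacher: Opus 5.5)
Your argument is correct and is essentially the paper's proof: expand $\|E_n+\vb^T\Psi\|^2$, replace $E_n$ in the cross term by $E^{(i)}-\sum_j a_{ij}\psi^{(j)}+\dt\sum_j a^\varepsilon_{ij}\tau(y^{(j)})$, collect the quadratic terms into $-\Psi^T M\Psi$, and drop this together with the contractivity terms. One small notational slip is that $\Psi^T\mB\Aep\tau(\vy)$ equals $\vb^T\mathrm{diag}(\Psi)\,\Aep\,\tau(\vy)$, not $\vb^T\Aep\,\mathrm{diag}(\Psi)\,\tau(\vy)$ (the latter pairs $\psi^{(j)}$ with $\tau(y^{(j)})$); your closing interpretation $\sum_{i,j} b_i\psi^{(i)}a^\varepsilon_{ij}\tau(y^{(j)})$ is the correct one and is exactly the term the paper obtains.
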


\begin{lem} \label{lem1}
    Under the conditions in Lemma \ref{lem0} above we can bound the internal
    stage errors by 
    \begin{eqnarray*}
\|  \vE  \| & \leq  & {C}_1   \left\|  E_n      \right\| + \dt  {C}_2   \left\|   \tau(\vy)    \right\| 
\end{eqnarray*}
where
\[ {C}_1 = \left(\mI - \left| \mI - \mAh  \mA^{-1} \right|\right)^{-1} \left|   \mAh \mA^{-1} \right| \ve, \; \; \; \; {C}_2 =  { \left(\mI - \left| \mI - \mAh  \mA^{-1} \right|\right)^{-1}  }  \left|   \mAh \mA^{-1}  \Aep\right| \ve .\]
\end{lem}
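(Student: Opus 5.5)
The plan is to write the stage-error recursion in vector form, use contractivity to turn $\Psi$ into a nonpositive diagonal multiple of $\vE$, and then precondition with $\mAh$. I take $\mAh$ to be the diagonal matrix with positive entries introduced for this purpose (as in \cite{Driscoll2026}, e.g. the diagonal of $\mA$). I work with a scalar ODE as the paper does, and use componentwise inequalities throughout.

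First, stack the stage equations for $E^{(i)}$ from the display preceding Lemma \ref{lem0}. This gives
\[ \vE = \ve E_n + \mA \Psi - \dt \Aep \tau(\vy). \]
By the mean value theorem, $\psi^{(i)} = \dt\,(f(z^{(i)}) - f(y^{(i)})) = -d_i E^{(i)}$ with $d_i = -\dt f'(\xi_i) \ge 0$, because contractivity of $f$ gives $f'\le 0$ in the scalar case. Writing $\mathbf{D} = \mathrm{diag}(d_i)\ge 0$, we have $\Psi = -\mathbf{D}\vE$. Since $\mA$ is lower triangular with $a_{ii}>0$ (needed for $\mA^{-1}$ to exist), we may multiply by $\mAh\mA^{-1}$:
\[ \mAh\mA^{-1}\vE = \mAh\mA^{-1}\ve E_n - \mAh \mathbf{D}\vE - \dt\,\mAh\mA^{-1}\Aep\tau(\vy). \]
Adding $\vE$ to both sides and rearranging then gives
\[ (\mI + \mAh\mathbf{D})\vE = (\mI - \mAh\mA^{-1})\vE + \mAh\mA^{-1}\ve E_n - \dt\,\mAh\mA^{-1}\Aep\tau(\vy). \]

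Next, the matrix $\mI+\mAh\mathbf{D}$ is diagonal with entries at least $1$. Taking componentwise absolute values therefore yields
\[ |\vE| \le \left|\mI - \mAh\mA^{-1}\right| |\vE| + \left|\mAh\mA^{-1}\right|\ve\,|E_n| + \dt\left|\mAh\mA^{-1}\Aep\right| |\tau(\vy)|. \]
I then bound $|\tau(\vy)| \le \|\tau(\vy)\|\,\ve$ componentwise, using the max norm on the stage vector. Moving the first term on the right to the left gives $(\mI - |\mI-\mAh\mA^{-1}|)\,|\vE| \le \left|\mAh\mA^{-1}\right|\ve\,\|E_n\| + \dt \left|\mAh\mA^{-1}\Aep\right|\ve\,\|\tau(\vy)\|$.

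The main obstacle is inverting $\mI - |\mI-\mAh\mA^{-1}|$ while preserving the inequality. This requires the inverse to exist and be entrywise nonnegative. I would argue it through the Neumann series $\sum_k |\mI-\mAh\mA^{-1}|^k$, which converges when the spectral radius of $|\mI-\mAh\mA^{-1}|$ is less than one. This is an implicit standing assumption on the choice of $\mAh$; it holds, for instance, when $\mAh=\mathrm{diag}(\mA)$, since then $\mI-\mAh\mA^{-1}$ is strictly lower triangular and hence nilpotent. Under that assumption the inverse is a nonnegative M-matrix inverse. Multiplying through then gives exactly $|\vE| \le C_1\|E_n\| + \dt\, C_2 \|\tau(\vy)\|$ componentwise, and taking norms completes the bound. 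For systems the same argument goes through with Kronecker products and the inner product in place of the scalar mean value step, which is routine.
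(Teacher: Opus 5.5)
Your proposal is correct and follows essentially the paper's route. Your identity $(\mI+\mAh\mathbf{D})\vE=\vE-\mAh\Psi$ is exactly the paper's contractivity step $\|E^{(i)}\|\le\|E^{(i)}-a_{ii}\psi^{(i)}\|$, multiplying by $\mAh\mA^{-1}$ is the paper's substitution $\Psi=\mA^{-1}(\vE-E_n\ve+\dt\,\Aep\tau(\vy))$, and the remaining differences are minor: you get the contractivity step from the scalar mean value theorem where the paper expands $\|E^{(i)}-a_{ii}\psi^{(i)}\|^2$ using $(E^{(i)},\psi^{(i)})\le 0$ (which works verbatim for systems), and you explicitly justify the final inversion via nilpotence of $\mI-\mAh\mA^{-1}$ and the nonnegative Neumann series, which the paper leaves implicit in ``rearranging.''
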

Here, $\mAh$ is a diagonal matrix containing the diagonal of $\mA$.

\begin{thm} \label{thm:stability}
Under the conditions in Lemma \ref{lem0} above 
we can bound the  growth in the distance between the two solutions for sufficiently large
$\dt$, and therefore the final time error:
\begin{itemize}
    \item  If $f_\varepsilon$ satisfies the first local consistency conditions
    \eqref{eq:Tau} 
    \[ \|E_{n+1}\|   \leq  \| E_n\|   + \mathcal{K}_1 L \dt^3 .\]
    So that at the final time $T_f$ we have  $ \| E_n\|  \leq \mathcal{K}_1 L T_f \dt^2 .$
    \item  If $f_\varepsilon$ satisfies both local consistency conditions
    \eqref{eq:Tau} and \eqref{eq:TauPrime}
    \[ \|E_{n+1}\|   \leq    \| E_n\| +  \mathcal{K}_2 L \dt^4 .\]
    So that at the final time $T_f$ we have  
$ \| E_n\|  \leq \mathcal{K}_2 L T_f \dt^3 .$
In some cases, $\mathcal{K}_1$ and $\mathcal{K}_2$ may be proportional to
the square of the stiffness of the problem.
\end{itemize}

\end{thm}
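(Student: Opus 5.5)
The argument combines Lemma~\ref{lem0} and Lemma~\ref{lem1} and then uses the local consistency conditions to show that $\tau(\vy)$ is small in powers of $\dt$. Start from Lemma~\ref{lem0}:
\[ \|E_{n+1}\|^2 \leq \|E_n\|^2 + 2\dt \left\| \vb^T \Aep \Psi \tau(\vy) \right\| . \]
Each $\psi^{(j)} = \dt\,(f(z^{(j)})-f(y^{(j)}))$ is bounded by $\dt L \|E^{(j)}\|$, where $L$ is the Lipschitz constant of $f$. This constant carries the stiffness dependence mentioned in the statement. Lemma~\ref{lem1} bounds the stage errors by $\|\vE\| \leq C_1\|E_n\| + \dt C_2 \|\tau(\vy)\|$, so the correction term becomes
\[ 2\dt^2 L \,|\vb^T||\Aep|\left( C_1 \|E_n\| + \dt C_2\|\tau(\vy)\|\right)\|\tau(\vy)\| . \]

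The key step is estimating $\tau(y^{(j)})$. I would Taylor expand $\tau$ about $y_n$; this is the point used for the linearization, so the consistency conditions are taken there. The stages satisfy $y^{(j)} - y_n = \dt\sum_k (a_{jk} f(y^{(k)}) + a^\varepsilon_{jk}\tau(y^{(k)})) = O(\dt)$, with a constant depending on $\|f\|$ near the solution and on $c_j$. Under \eqref{eq:Tau} we have $\tau(y^{(j)}) = \tau_u(\xi)(y^{(j)}-y_n) = O(\dt)$. If \eqref{eq:TauPrime} also holds, the first-order term vanishes and $\tau(y^{(j)}) = \tfrac12\tau_{uu}(\xi)(y^{(j)}-y_n,y^{(j)}-y_n) = O(\dt^2)$. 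For the $O(\dt)$ claim to be valid, the stage increments must be bounded uniformly in $\dt$. I would obtain this by solving the stage equations as a fixed point with contractivity of $f$ and nonsingularity of $\mA$, or simply by assuming bounded derivatives along the trajectory. This is the main obstacle: for stiff problems the implicit solve makes $f(y^{(k)})$ bounded only by $L\|y\|$. The resulting constants then scale like $L$, and the product with the $L$ from $\Psi$ gives the possible stiffness-squared dependence noted in the theorem.

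Substituting $\|\tau(\vy)\| \leq K\dt^p$ with $p=1$ or $p=2$ gives
\[ \|E_{n+1}\|^2 \leq \|E_n\|^2 + \alpha\, \dt^{2+p}\|E_n\| + \beta\, \dt^{3+2p}, \]
where $\alpha = 2L|\vb^T||\Aep|C_1K$ and $\beta = 2L|\vb^T||\Aep|C_2K^2$. I would then write the right-hand side as less than or equal to $(\|E_n\| + \mathcal{K}\dt^{2+p})^2$. This holds once $\mathcal{K} \geq \alpha/2$ and $\mathcal{K}^2\dt^{4+2p} + \ldots \geq \beta\dt^{3+2p}$. Arranging this last inequality is where the phrase ``sufficiently large $\dt$'' in the statement enters; alternatively, $\beta$ can be absorbed by taking $\mathcal{K}$ larger. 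Taking square roots gives $\|E_{n+1}\| \leq \|E_n\| + \mathcal{K}_p L\dt^{2+p}$, which is $\dt^3$ for \eqref{eq:Tau} alone and $\dt^4$ when \eqref{eq:TauPrime} also holds. Finally, sum over the $T_f/\dt$ steps from $E_0 = 0$ to obtain the global bounds $\mathcal{K}_1 L T_f\dt^2$ and $\mathcal{K}_2 L T_f \dt^3$.
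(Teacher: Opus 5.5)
Your proposal is correct and follows the paper's proof essentially step for step. Both arguments use the Lipschitz bound $\|\Psi\| \le \Delta t\, L\|\mathbf{E}\|$ together with Lemma~\ref{lem1}, a Taylor expansion of $\tau$ about $y_n$ giving $\|\tau(\mathbf{y})\|\le M_1\Delta t$ or $M_2\Delta t^2$, substitution into Lemma~\ref{lem0}, and completing the square, where the leftover $\Delta t^{3+2p}$ term is dominated by $\mathcal{K}^2\Delta t^{4+2p}$ exactly when $\Delta t$ is large enough. Note that absorbing $\beta$ by enlarging $\mathcal{K}$ gives a uniform constant only for $\Delta t$ bounded below, so it is the same ``sufficiently large $\Delta t$'' restriction rather than an alternative to it.
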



\section{New methods for smooth perturbations} \label{sec:NovelMethods}


Using the conditions in Table \eqref{tab:OC} we are able to find the following methods.
The methods of order three and four satisfy conditions \eqref{Coefficients} and so
are B-stable.

\noindent{\bf A2s3p3m: A two stage third order method.} This method is 
based on the coefficients $\mA$ and $\vb$ from 
two stage third order SDIRK method in \cite{kurdi,norsett,CarpenterRK}
given by the parameter $\gamma = \frac{\sqrt{3} + 3}{6}$:
\begin{align} \label{eq:A2s3p3m}
\Aep = \left( \begin{array}{ll} 
   \gamma    &               0 \\
  -1   & \gamma \\
\end{array} \right), \;  
\tilde{\mA} =   \left( \begin{array}{ll}    
0     &        0 \\
2(1 - \gamma )    &              0 \\
\end{array} \right) ,
\vb = \left( \begin{array}{l} 
  1/2 \\ 
 1/2  \\
\end{array} \right).
\end{align}
This method is third order for all smooth perturbations 
that satisfy the first  local consistency condition $\tau(u_n) = 0$ (Equation \eqref{eq:Tau}),
without requiring that the perturbation satisfies the 
additional local consistency condition,
so that we may have $\tau_y(u_n) \neq 0.$

\noindent{\bf A4s4p4m: A four stage fourth order method.}
The four stage method given by the parameter
$ \alpha = \frac{2}{\sqrt{3}} \cos(\frac{\pi}{18})$
\begin{align} \label{eq:A4s4p4m}
\Aep &= \left( \begin{array}{cccc} 
 \frac{1}{2}  & 0  & 0 & 0 \\
    -1 & \frac{1}{2}  & 0  & 0  \\ 
    -2  & \frac{2 \sqrt{3}-1}{2}  & \frac{1}{2}  & 0  \\ 
    \frac{2 \sqrt{3}}{3}   &  -2  &  -\frac{\sqrt{3}}{3}  & \frac{1}{2}   \\ 
\end{array} \right) \; , \;
\tilde{\mA} =  \left(\begin{array}{cccc} 
0           & 0             & 0         &  0        \\
\frac{1}{2} & 0             & 0         &  0        \\
\frac{3}{2} & \frac{3}{2} - \sqrt{3}    &  0   &  0 \\
0           & 2 -  \frac{\sqrt{3}}{3} & 0 & 0 \\
\end{array} \right) , \; \; 
\vb = \left( \begin{array}{c} 
  \frac{8-\sqrt{3}}{12} \\  \frac{1}{6} \\ \frac{1}{6}  \\ 
\frac{\sqrt{3}}{12} \\
\end{array} \right) .
\end{align}
This method is fourth  order for all smooth perturbations that satisfy the first  
local consistency condition $\tau(u_n) = 0$ (Equation \eqref{eq:Tau}),
without requiring that the perturbation satisfies the 
second local consistency condition. This means that we may have $\tau_u(u_n) \neq 0.$ 


\smallskip

If we require that the perturbed operator $f_\varepsilon$ is designed such that the second 
local consistency condition \eqref{eq:TauPrime} is also satisfied, 
then the order conditions simplify significantly. 
In this case, we are able to obtain a three stage fourth order method.

\noindent{\bf B3s4p4m: Three stage fourth order method:}
The method given by the parameter
$\alpha = \frac{2}{\sqrt{3}} \cos(\frac{\pi}{18}) $
and $\beta = 9 \alpha^5 + 12 \alpha^4 - 10 \alpha^3 - 16 \alpha^2 - 3 \alpha$,
\begin{align} \label{eq:B3s4p4m}
    \mA &=  \left( \begin{array}{ccc}
\frac{1+\alpha}{2} & 0 & 0 \\
- \frac{\alpha}{2} &  \frac{1+\alpha}{2} & 0 \\
(1+\alpha) &  -  (1+2 \alpha) & \frac{1+\alpha}{2} \\
\end{array} \right) ,
\vb = \left( \begin{array}{c}
\frac{1}{6 \alpha^2} \\
1 - \frac{1}{3 \alpha^2}\\  \frac{1}{6 \alpha^2} \\
\end{array} \right) , \nonumber \\
\Aep &= \left( \begin{array}{ccc} 
\frac{1+\alpha}{2 } &  0 & 0 \\
         1 - \frac{3}{2} \alpha  & \frac{1+\alpha}{2} & 0\\
    2  & \beta  & \frac{1+\alpha}{2} \\ 
\end{array} \right) ,  \;
\tilde{\mA} =  
\left( \begin{array}{ccc} 
0 & 0 & 0 \\
\alpha - 1 &  0  & 0 \\
\alpha - 1  &  -  (1+2 \alpha + \beta ) & 0  \\
\end{array} \right) .
\end{align}
This method is fourth order if  the perturbation satisfies
both local consistency conditions
\eqref{eq:Tau} and \eqref{eq:TauPrime}, so that
$\tau(u_n) = \tau_u(u_n) =0 $. If the second condition \eqref{eq:TauPrime} 
is not satisfied  then we expect to see  only second order.

\smallskip

\noindent{\bf B6s5p5m: six stage fifth order method}
Fifth order DIRK methods cannot be B-stable \cite{CarpenterRK}, 
and so we do not expect the results of 
Theorem \ref{thm:stability} to hold. 
However, we found the following fifth order method which  A-stable and 
satisfies the perturbation conditions to fifth order if the perturbation satisfies
the  local consistency  conditions \eqref{eq:Tau} and \eqref{eq:TauPrime}), so that 
$\tau(u_n) = \tau_u(u_n) = 0$.  The coefficients of this method are given in Appendix
\ref{app:5thO}. In Section \ref{sec:NumericalTests} we will test this method
and study its stability properties in practice.


   \begin{rmk}
      We use the following notation: Methods that that have a diagonal $\Aep$ are designated with a `D'.
      Methods that require the local consistency condition \eqref{eq:Tau} are designated with a `A'.
       Methods that require both local consistency conditions \eqref{eq:Tau} and \eqref{eq:TauPrime}
       are designated with a `B'. The rest of the name is in the format SsPpMm where S is the number of stages,
       P is the order of the method, M is the perturbation order.
  \end{rmk}


\section{Numerical tests} \label{sec:NumericalTests}



In this section we consider the methods {\bf A2s3p3m} \eqref{eq:A2s3p3m}, 
{\bf A4s4p4m} \eqref{eq:A4s4p4m},
{\bf B3s4p4m} \eqref{eq:B3s4p4m}, 
and {\bf B6s5p5m} \eqref{eq:B6s5p5m}, and the following diagonally perturbed DIRK methods:
\begin{enumerate}
\item {\bf D1s2p1m (IMR)} The diagonally perturbed second order implicit midpoint rule 
\begin{eqnarray} \label{pIMR}
& u^{(1)}  =  u_n + \frac{1}{2} \dt f_\varepsilon (u^{(1)})  , \; \; 
& u_{n+1}  =  u_n + \dt f(u^{(1)}) .
\end{eqnarray}
     \item {\bf D2s3p1m  (SDIRK3)} 
The diagonally perturbed third order singly diagonally implicit method  given by 
\begin{eqnarray} \label{pSDIRK3} 
u^{(1)} & = & u_n + \gamma\dt f_\varepsilon (u^{(1)})   \nonumber \\ 
u^{(2)} & = & u_n + (1-2\gamma)\dt  f(u^{(1)}) + \gamma\dt f_\varepsilon (u^{(2)})    \nonumber \\
u_{n+1} & = & u_n + \frac{\dt}{2}f(u^{(1)}) + \frac{\dt}{2}f(u^{(2)}),
\end{eqnarray}
where $\gamma = \frac{\sqrt{3} + 3}{6}$.
This method does not need $\tau(u_n) = 0$, but we expect it to perform
as $O(\dt^3) + O(\dt \tau)$. This method is the diagonally perturbed version 
of the same method that resulted in the perturbed method {\bf A2s3p3m} \eqref{eq:A2s3p3m}.
\item {\bf D3s4p1m  (SDIRK4)} The perturbed fourth order pSDIRK4 given by  $\alpha = \frac{2}{\sqrt{3}} \cos(\frac{\pi}{18}) $:
\begin{eqnarray} \label{pSDIRK4}
u^{(1)} & = & u_n + \frac{1+\alpha}{2} \dt f_\varepsilon(u^{(1)})  \nonumber \\ 
u^{(2)} & = & u_n - \frac{\alpha}{2} \dt  f(u^{(1)}) 
+ \frac{1+\alpha}{2} \dt f_\varepsilon(u^{(2)}) \nonumber\\
u^{(3)} & = & u_n + (1+\alpha) \dt  f(u^{(1)}) -  (1+2 \alpha)  \dt f(u^{(2)}) 
+\frac{1+\alpha}{2} \dt f_\varepsilon(u^{(3)})  \nonumber \\
   \; \; \; \;  u_{n+1} & = & u_n + \frac{\dt}{6 \alpha^2} \big(f(u^{(1)}) 
    + (6 \alpha^2 -2) f(u^{(2)}) + f(u^{(3)})\big) ,
\end{eqnarray}
Note that when $f_\varepsilon = f$ this method becomes the usual SDIRK4 method of
\cite{norsett,CrouzeixBstab}.
 \end{enumerate}
 These are diagonally perturbed methods we previously studied, and in \cite{Driscoll2026}
 we showed that their performance can be improved with the use of specially designed 
 stabilized corrections. However, in this work we do not correct these methods.
 
We consider three numerical examples, where for each 
the perturbed function $f_\varepsilon$ results from different  linearizations.
We show numerically that the theory and novel methods developed in 
Sections \ref{sec:MADIRK}  and \ref{sec:NovelMethods} perform as expected.
We also demonstrate that the properties of the linearization 
impact the performance of the methods as expected.

 
\subsection{Inviscid Burgers' equation}


Consider the inviscid Burgers' equation
 $ \mathcal{U}_t + \left( \frac{1}{2} \mathcal{U}^2 \right)_x =0,$
 on the domain $x = [0,2\pi)$, with initial condition $\mathcal{U}(x,0)= \frac{1}{2} + \frac{1}{4}\sin(x)$ 
 and periodic boundary conditions. We are interested in the solution of this equation up to final time 
 $T_f = 3.5$, which is before the shock forms.
 We semi-discretize this equation using a Fourier spectral method differentiation 
 matrix $D_x$, resulting in the system of ODEs 
 $ u' = f(u) = - \frac{1}{2} D_x u^{2}.$
 We then evolve this forward with the time-evolution methods  listed above.
In this section we verify the performance of the novel methods by comparing them
to previously used diagonally perturbed methods, 
and investigate the impact of different linearizations 
on the different perturbed methods.

The first linearization $f_\varepsilon^1$ of $f$
is obtained by linearizing $f$ around the point $\bar{y} = u_n$
and then differentiating:
 \begin{eqnarray}\label{linB1}
   f_{\epsilon}^1(\bar{y},y) &=& - \frac{1}{2} D_x \bar{Y} y, 
   \end{eqnarray}
   where $\bar{Y} = \text{diag}(\bar{y})$ denotes the diagonal matrix with $\bar{y}$ on its diagonal. 
The perturbation for this form is
  \[   \tau^1(\bar{y},y) =  f(y) - f_{\epsilon}^1(\bar{y},y) = 
  - \frac{1}{2} D_x Y y + \frac{1}{2} D_x \bar{Y} y 
=  - \frac{1}{2} D_x ( Y -\bar{Y} ) y .
\]
This is $ O(\dt)$ if    $\left| y - \bar{y} \right| = O(\dt)$.
It is clear that if $\bar{y} = u_n$ then  $\tau^1(\bar{y},u_n) = 0$.

The second linearization 
is obtained by differentiating first and then linearizing:
     \begin{eqnarray}\label{linB2}
     f_{\epsilon}^2(\bar{y},y) &=& - \bar{Y} D_x  y.
    \end{eqnarray}
The perturbation     resulting from this linearization:
\[ \tau^2(\bar{y},y) = f(y) - f_{\epsilon}^2(\bar{y},y) 
=   - \frac{1}{2} D_x Y y   + \bar{Y} D_x  y 
= (Y D_x  - \frac{1}{2} D_x Y) y +  (\bar{Y} -Y) D_x y,
\]
while the second part is clearly $O(\dt)$ if $\left| y - \bar{y} \right| = O(\dt)$, 
the first part depends on the  spatial refinement rather than the time-step. 
For this reason, we do not expect that $\tau^2_{\bar{y}}(u_n) $ will be zero for this linearization.
However, as $N_x \rightarrow 0$, we should see $\tau^2_{\bar{y}}(u_n) \rightarrow 0$
if $\bar{y} = u_n$. 

Finally, we  use a Taylor expansion to linearize \eqref{linB3} (green)
\begin{eqnarray}\label{linB3}
f_{\epsilon}^3(\bar{y},y) &=&  f(\bar{y}) + 
    f'(\bar{y}) \left( y - \bar{y} \right) 
    =  - \frac{1}{2} D_x \bar{y}^2
    - D_x \bar{Y} \left( y - \bar{y} \right) .
    \end{eqnarray}
    If $\left| y - \bar{y} \right| = O(\dt)$ then
the perturbation error will be  $O(\dt^2)$ 
\[ \tau^3(\bar{y},y) = f(y) - f_{\epsilon}^3(\bar{y},y) = - \frac{1}{2} D_x Y y +
  \frac{1}{2} D_x \bar{Y} \bar{y}
+ D_x \bar{Y} \left( y - \bar{y} \right) 
=  -  \frac{1}{2} D_x  \left(\bar{Y} -Y
\right)^2 \ve 
\]
  We expect that this linearization, which has a smaller perturbation, 
  to provide enhanced stability and accuracy.
  Furthermore, we note that if $\bar{y} = u_n$ then this linearization satisfies 
both local consistency conditions \eqref{eq:Tau} and \eqref{eq:TauPrime}.

\begin{figure}[htb]
    \centering
    \begin{subfigure}[b]{0.32\textwidth}
        \centering
        \includegraphics[width=\textwidth]{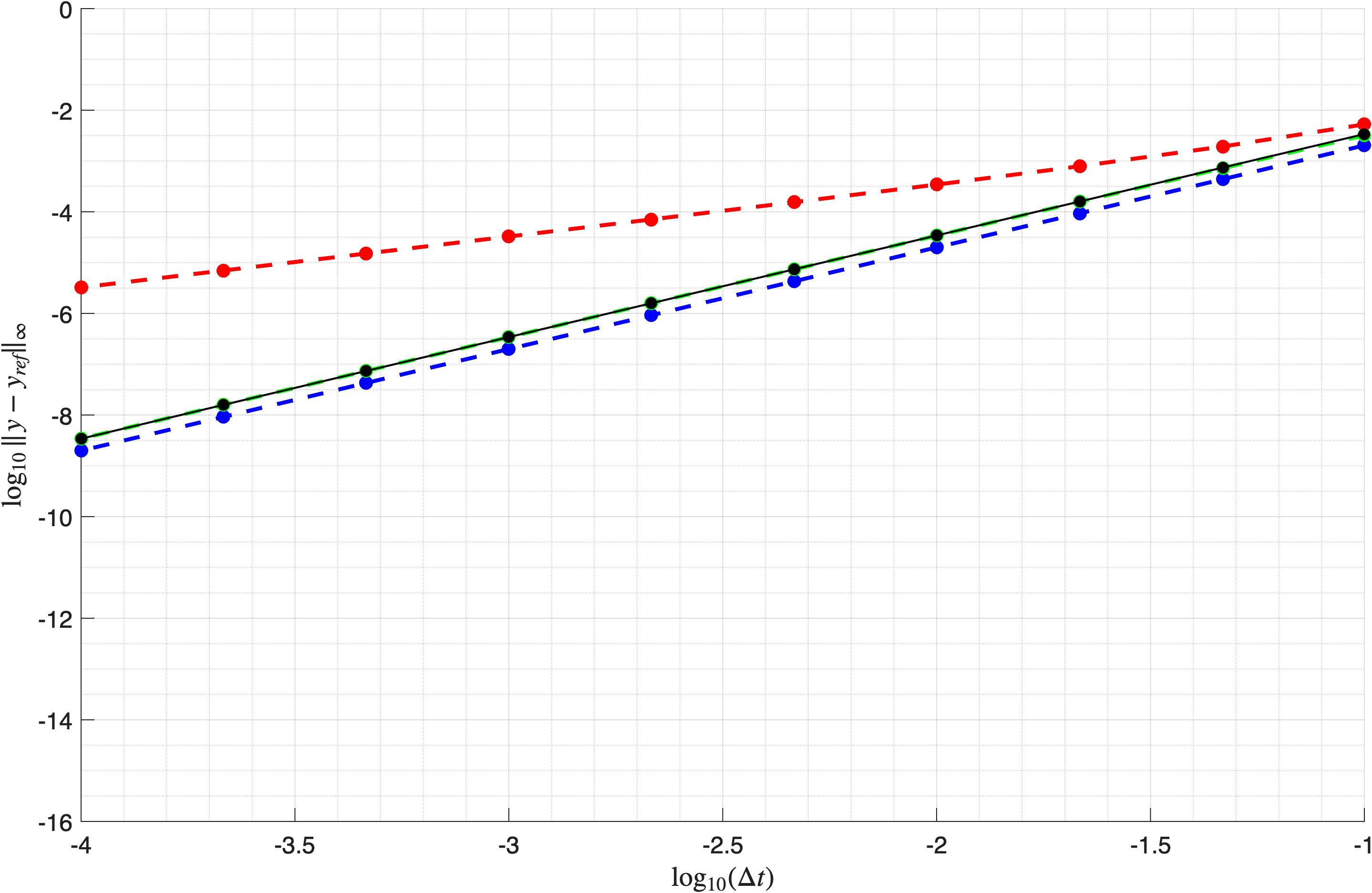}
    \end{subfigure}
    \hfill
    \begin{subfigure}[b]{0.32\textwidth}
        \centering
        \includegraphics[width=\textwidth]{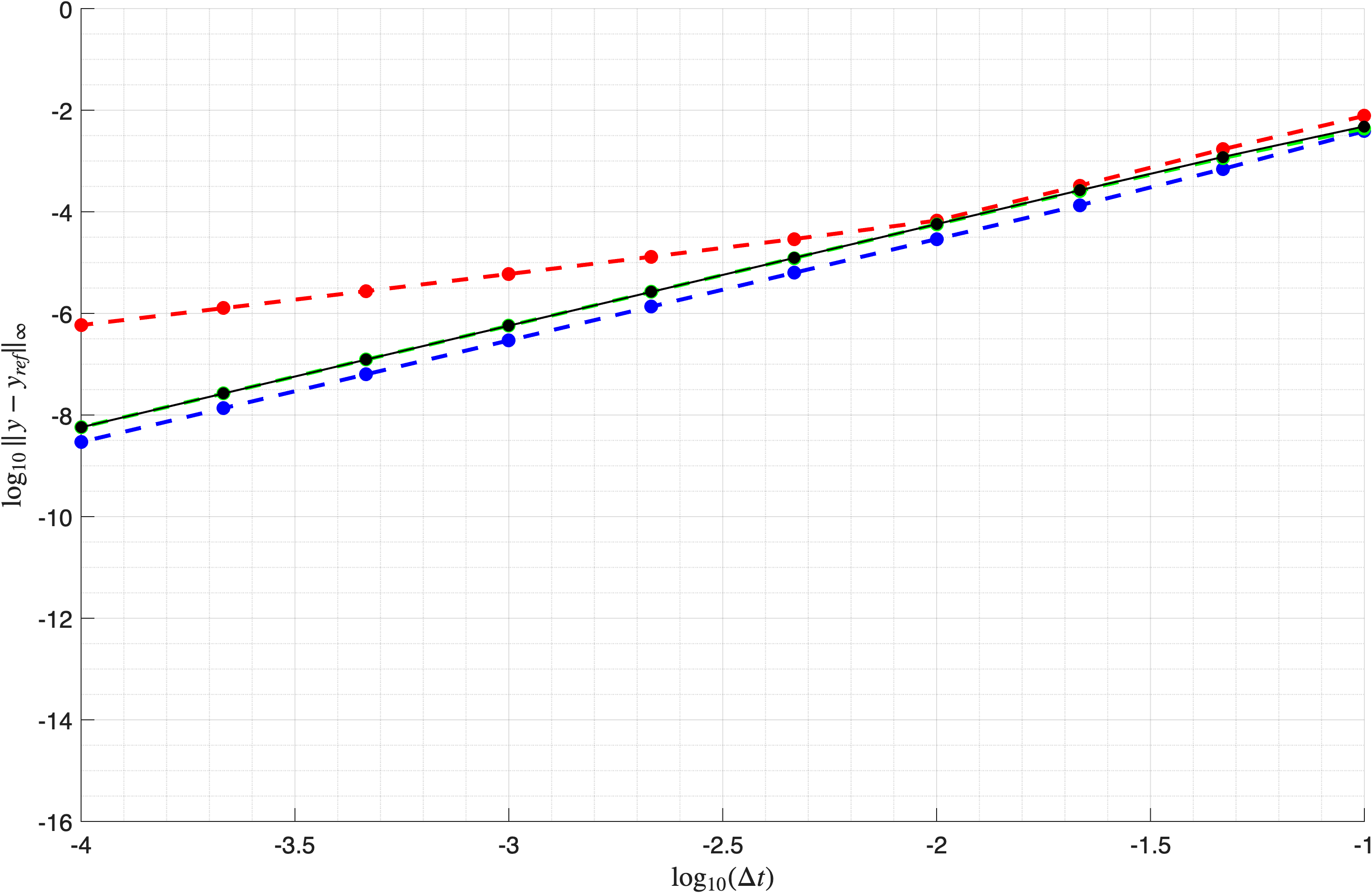}
    \end{subfigure}
    \hfill
    \begin{subfigure}[b]{0.32\textwidth}
        \centering
        \includegraphics[width=\textwidth]{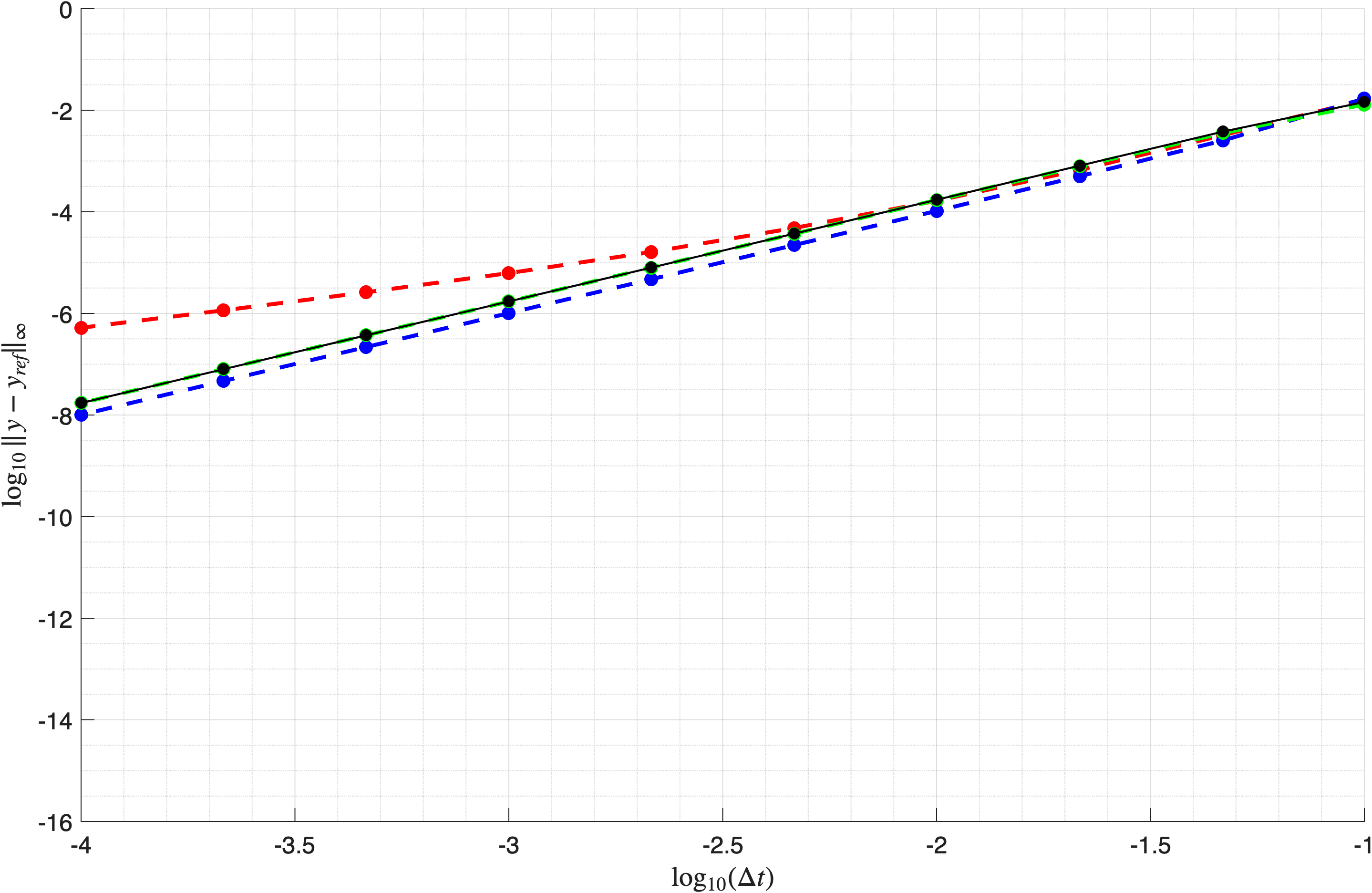}
    \end{subfigure}

    \vspace{1em}

    \begin{subfigure}[b]{0.32\textwidth}
        \centering
        \includegraphics[width=\textwidth]{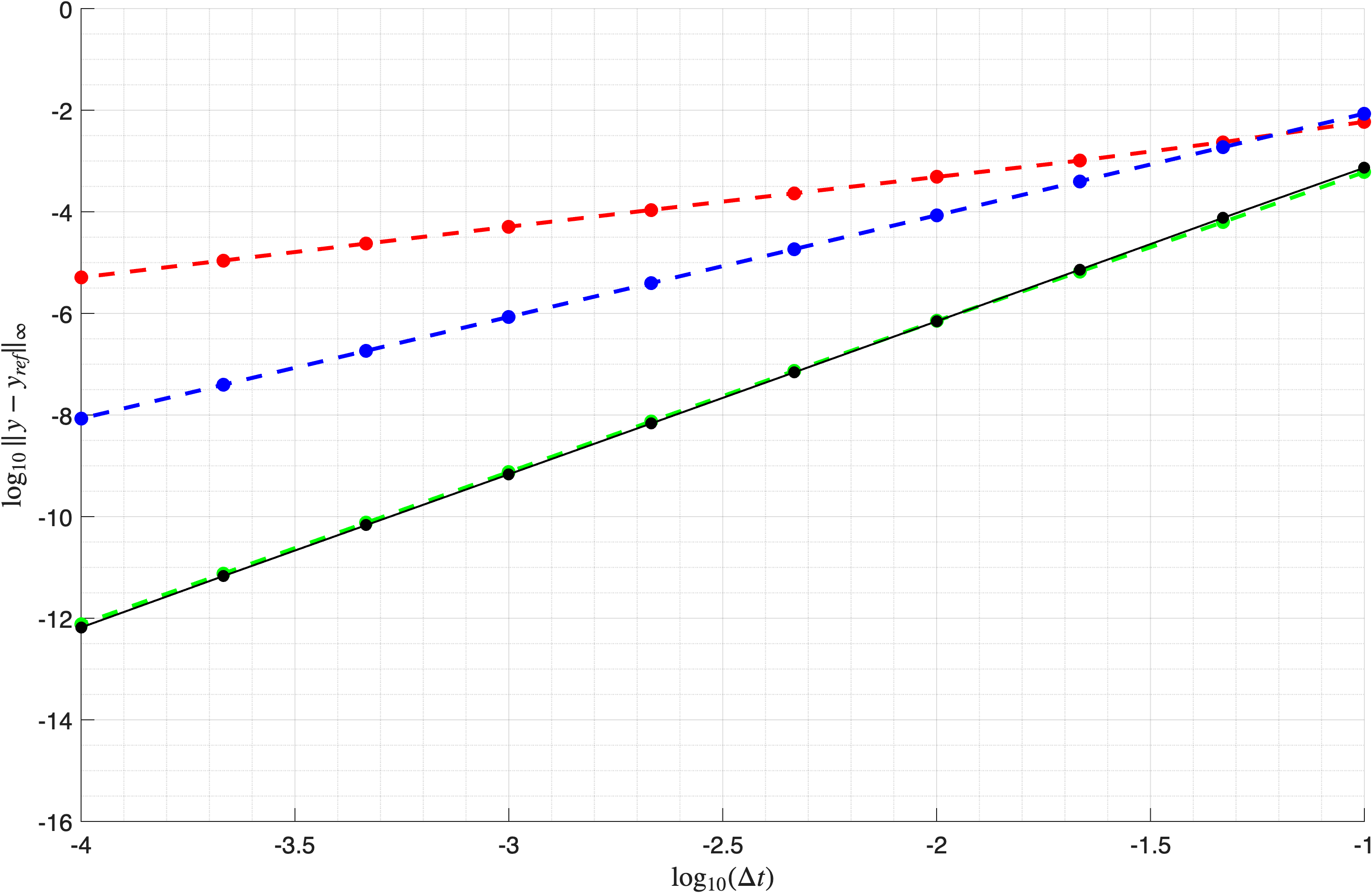}
    \end{subfigure}
    \hfill
    \begin{subfigure}[b]{0.32\textwidth}
        \centering
        \includegraphics[width=\textwidth]{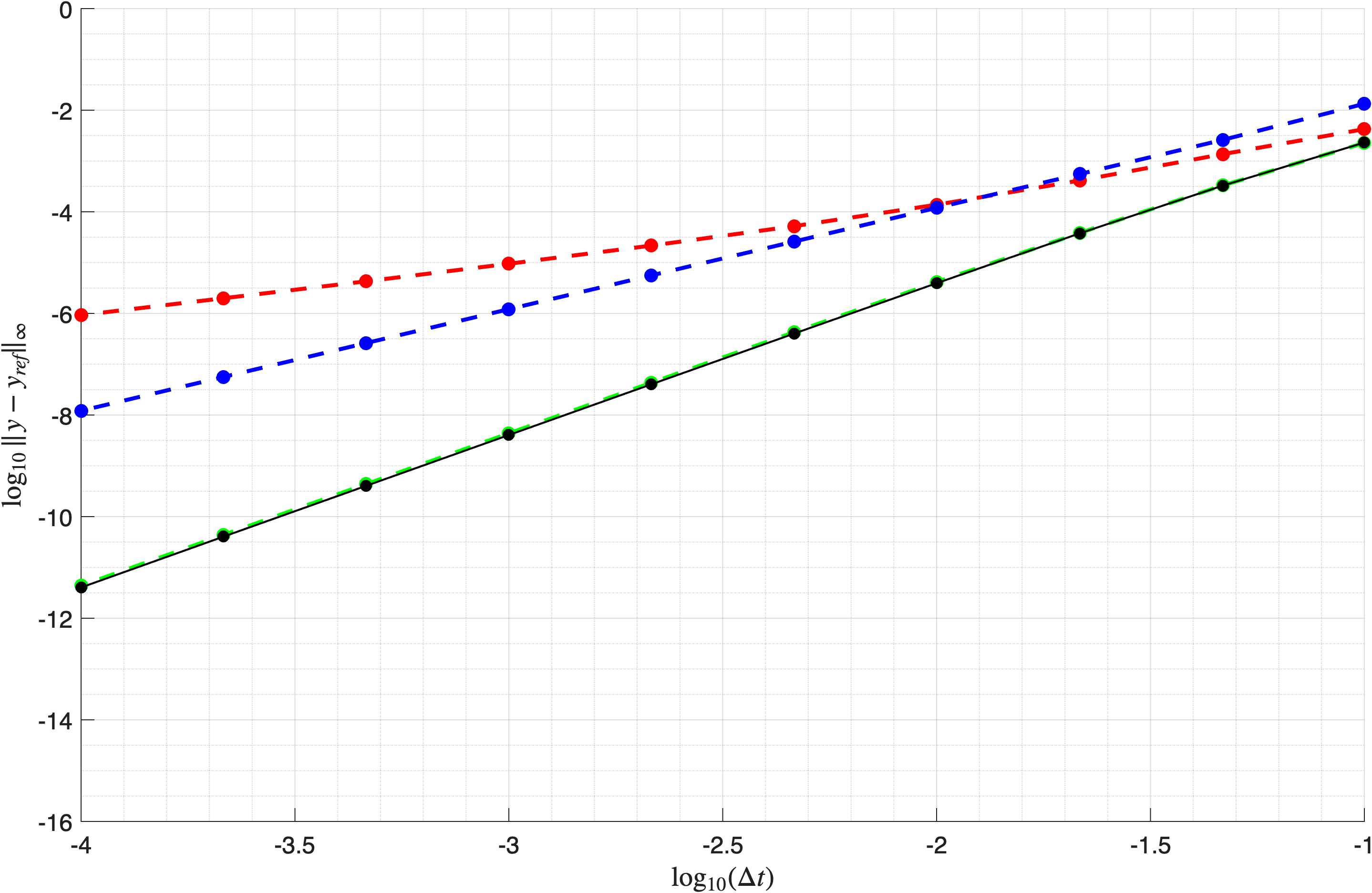}
    \end{subfigure}
    \hfill
    \begin{subfigure}[b]{0.32\textwidth}
        \centering
        \includegraphics[width=\textwidth]{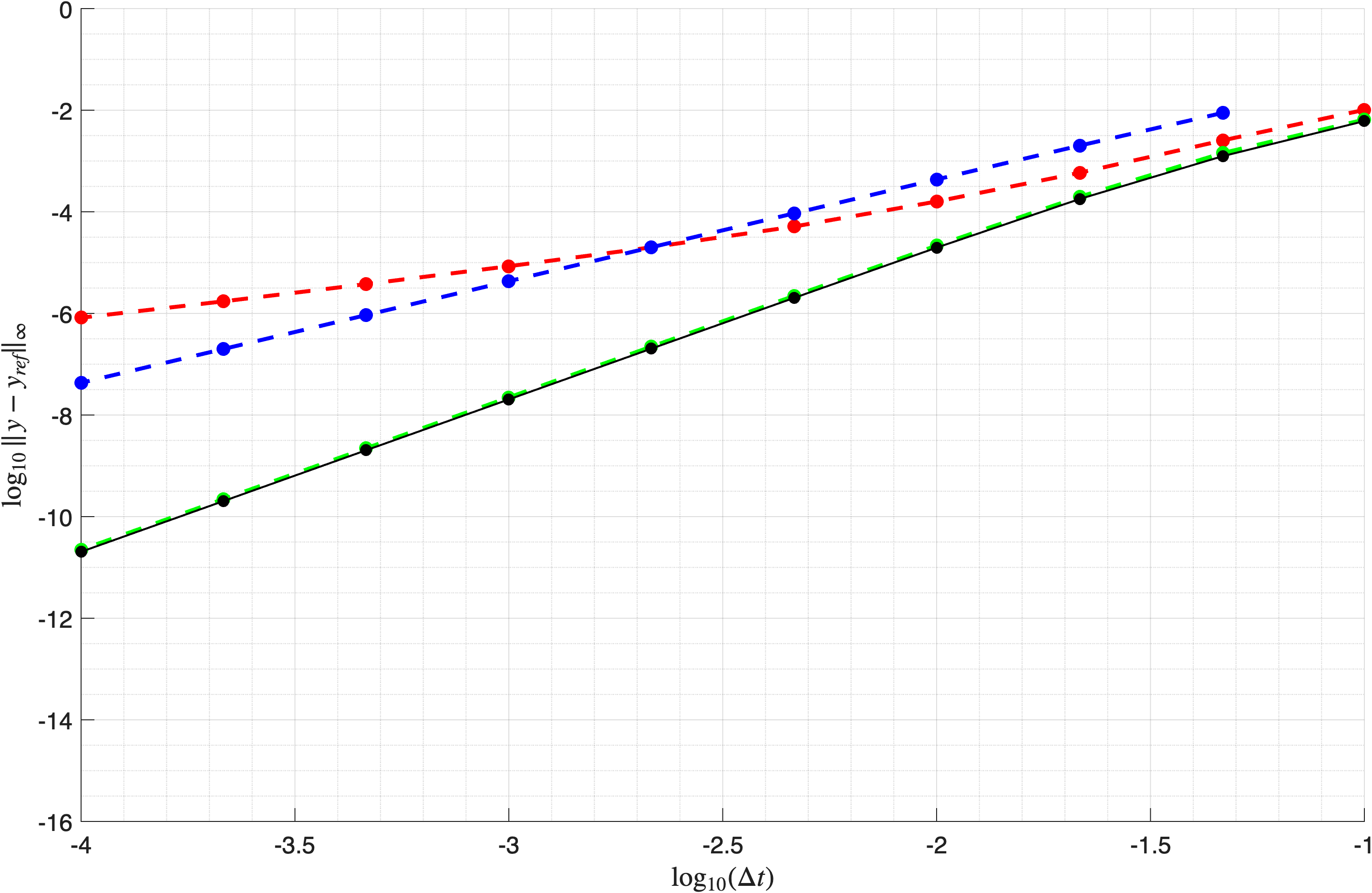}
    \end{subfigure}

    \vspace{1em}

    \begin{subfigure}[b]{0.32\textwidth}
        \centering
        \includegraphics[width=\textwidth]{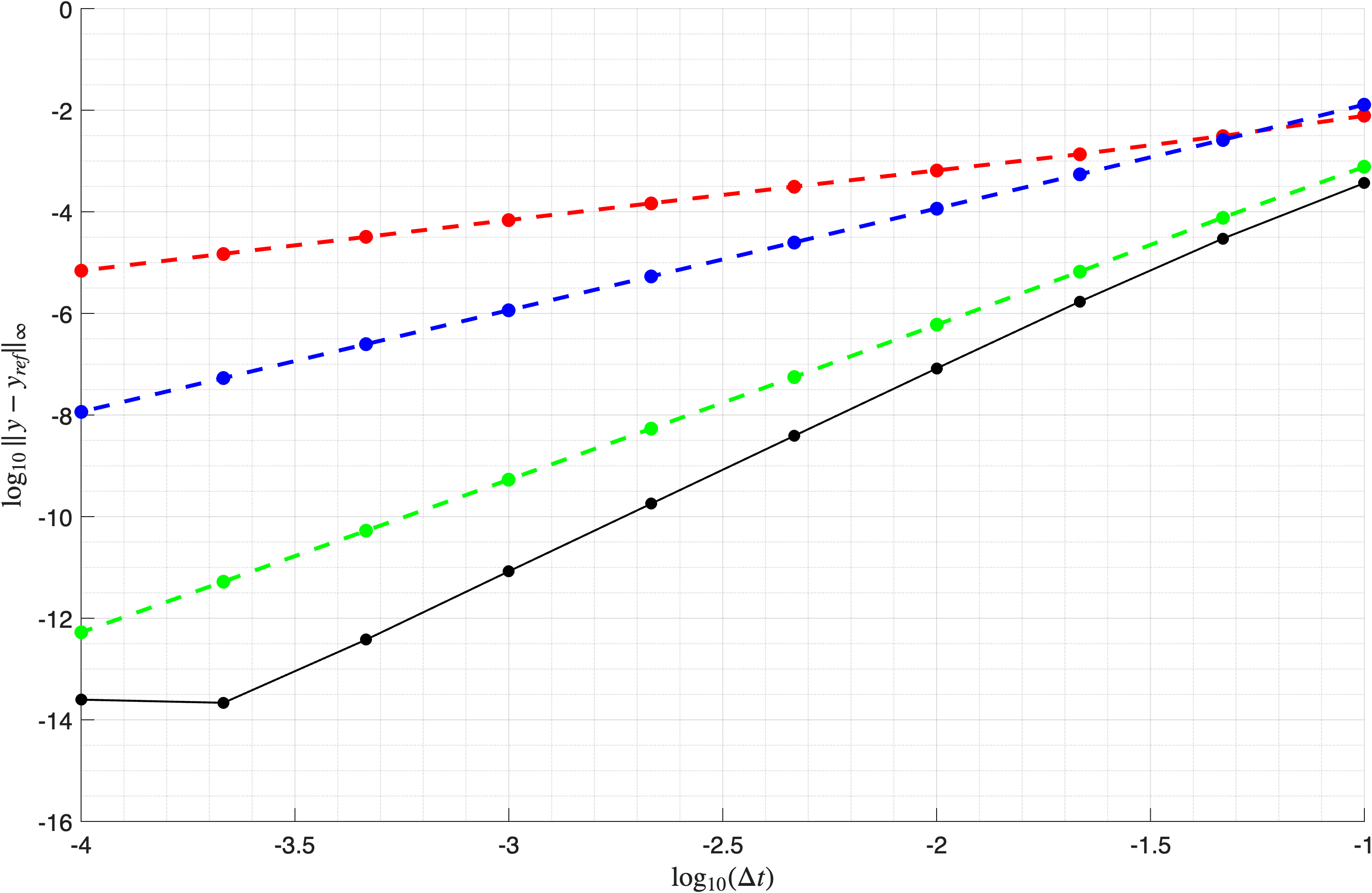}
    \end{subfigure}
    \hfill
    \begin{subfigure}[b]{0.32\textwidth}
        \centering
        \includegraphics[width=\textwidth]{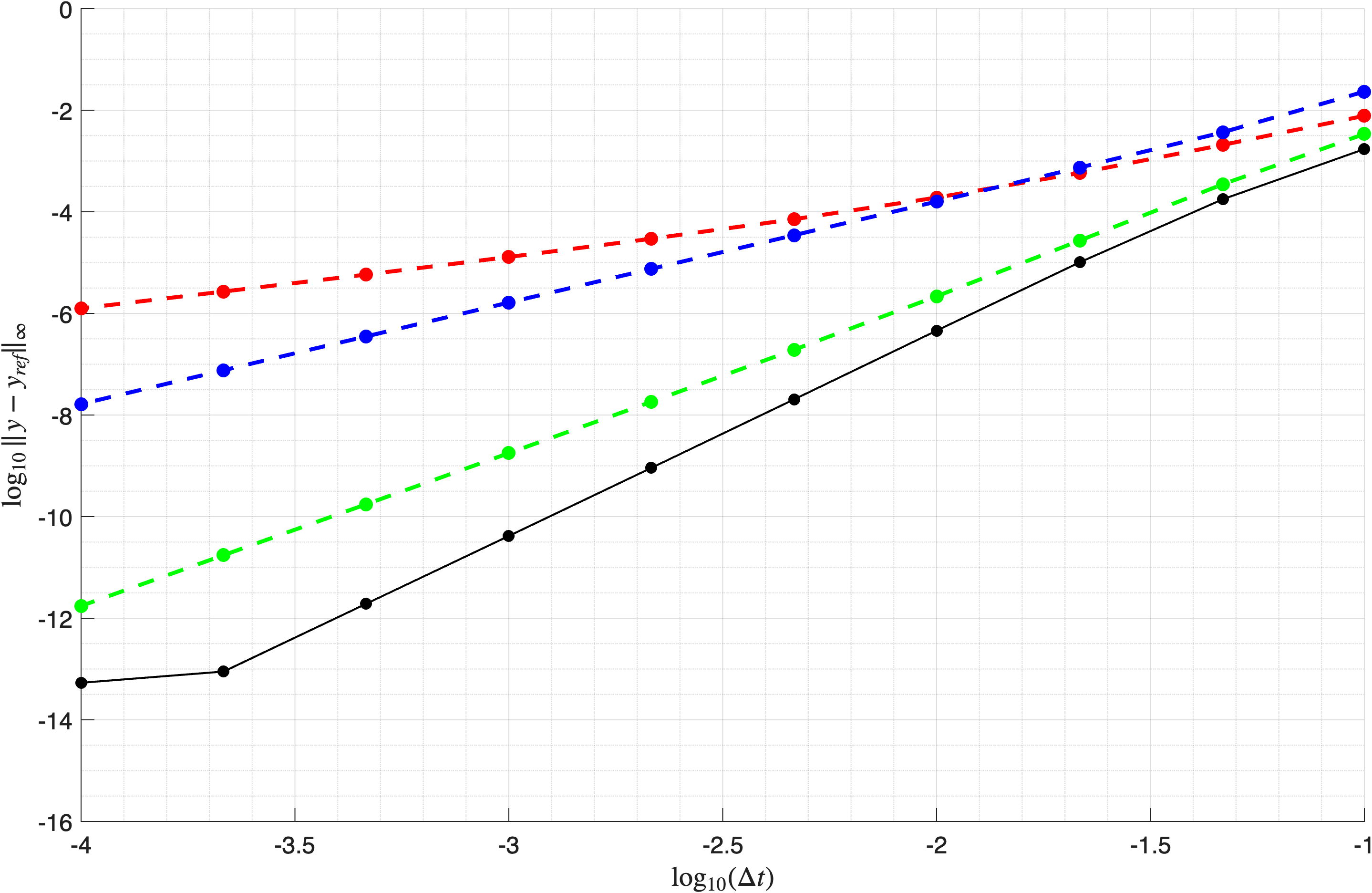}
    \end{subfigure}
    \hfill
    \begin{subfigure}[b]{0.32\textwidth}
        \centering
        \includegraphics[width=\textwidth]{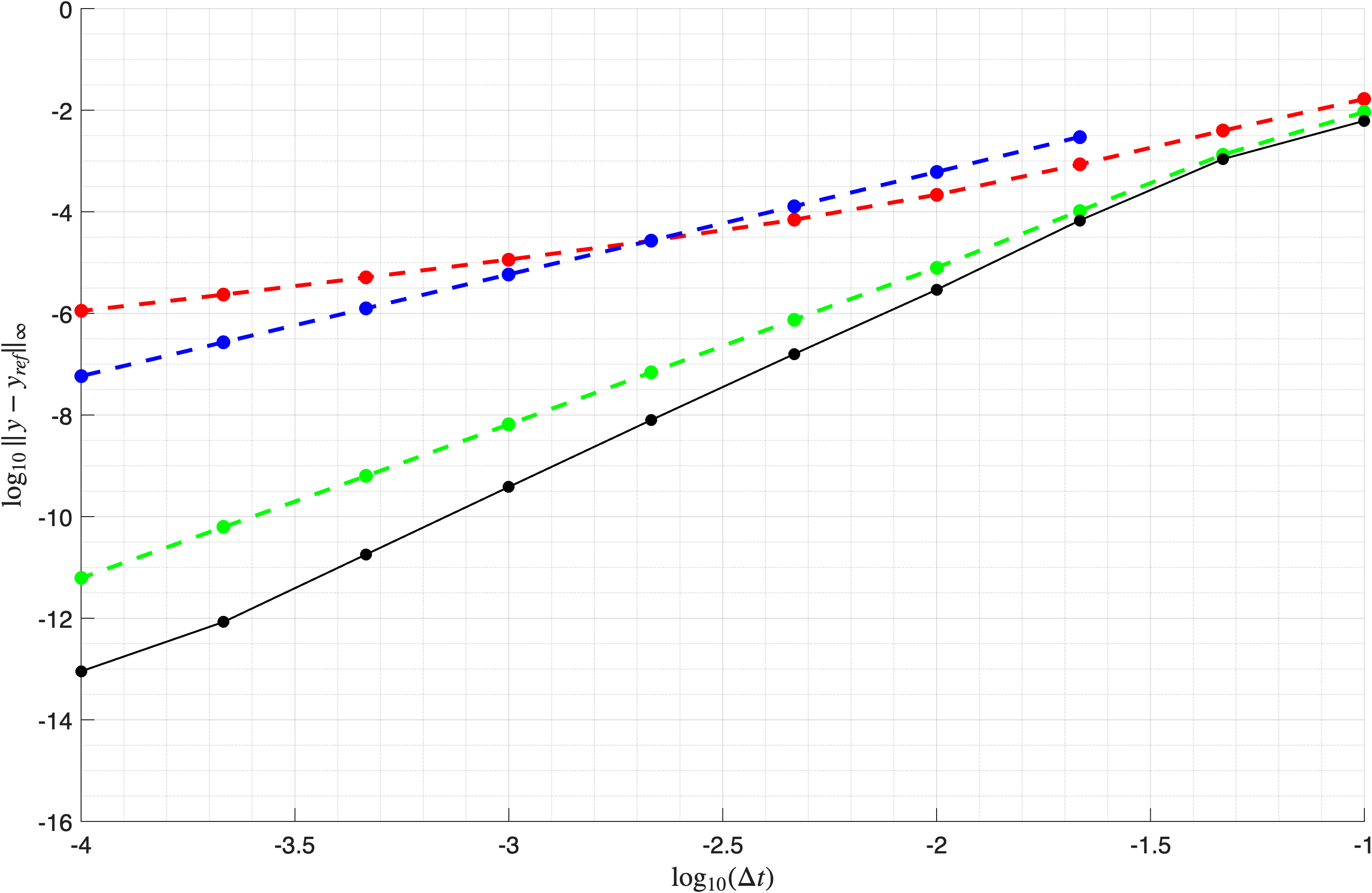}
    \end{subfigure}
    \caption{Burgers' Equation: 
    Final convergence plots for the diagonally perturbed DIRK methods,
    D1s2p1m (IMR) \eqref{pIMR} (top), D2s3p1m (SDIRK3) \eqref{pSDIRK3} (middle), 
    and D3s4p1m (SDIRK4) \eqref{pSDIRK4} (bottom),
    across varying spatial resolutions $N_x=20$ (left),
    $N_x=40$ (middle), and $N_x=100$ (right). The maximum norm errors are measured 
    against a reference solution calculated using 4th order Runge Kutta method, and plotted
    for different values of $\dt$.
    We compare the impact of the 
three linearizations
\eqref{linB1} (blue), \eqref{linB2} (red), and
\eqref{linB3} (green),  for $f_\epsilon$, with $\bar{y} = u_n.$ 
}
    \label{fig:convergence_plots}
\end{figure}

Figure \ref{fig:convergence_plots} focuses on previously studied
diagonally perturbed DIRK methods. The figures show the final time maximum norm  
errors compared to a reference solution for 
the  diagonally perturbed methods D1s2p1m \eqref{pIMR} (top), 
the diagonally perturbed D2s3p1m \eqref{pSDIRK3} (middle),
and the diagonally perturbed D3s4p1m \eqref{pSDIRK4} (bottom) for various time-steps
$\dt$ (shown as $\log_{10}(\dt)$ on the horizontal axis).
In each plot we compare the three linearizations
\eqref{linB1} (blue), \eqref{linB2} (red), and
\eqref{linB3} (green),  for $f_\epsilon$, with $\bar{y} = u_n.$ 
The black line is the result of solving the implicit stages correctly with 
a fully resolved Newton solver as reference. 
We also compare three levels of spatial refinement: $N_x =20$ (left),
$N_x =40$ (middle), and $N_x =100$ (right).

On the top row of Figure \ref{fig:convergence_plots} 
we observe that for the diagonally perturbed second order implicit midpoint rule IMR 
\eqref{pIMR} the consistent linearizations \eqref{linB1} (blue) and \eqref{linB3} (green)
always perform at the target second order.
This is due to the nice property of the linearizations that $\tau(u_n)=0$ thus eliminating the second order perturbation term as noted in Table \ref{tab:OC}. However, the inconsistent linearization
\eqref{linB2} (red) is less accurate  when the spatial refinement is low (for $N_x=20$)
but improved when the number of spatial points increases. 
Again this can be directly traced back to the behavior of the perturbation 
error $\tau^2(u_n,u_n)\neq0$, which is nonzero but decays  with $\Delta x$. When refined enough 
we see the $O(\Delta t^2)$ error of the underlying integration scheme dominate.

In the middle row of Figure \ref{fig:convergence_plots}  
we see that for the third order SDIRK3 \eqref{pSDIRK3}
The inconsistent linearization \eqref{linB2} (red) gives, roughly, first order convergence,
the first order consistent linearization  \eqref{linB1} (blue) gives second order performance,
and the second order consistent Taylor series linearization \eqref{linB3} (green)
gives third order performance. On the other hand, the inconsistent linearization
\eqref{linB2} (red) gives only first order for small number of points, but becomes second
order as the spatial grid is refined. Here we note the added feature of the Taylor series linearization and that is $(\tau^3_u(\bar{y},u_n)=0$, thus eliminating the third order 
perturbation conditions found in Table \ref{tab:OC}, obtaining 
consistent $O(\Delta t^3)$ errors for all time-steps.
Finally, in the bottom row we look at the impact of the linearizations on the 
fourth order SDIRK4 \eqref{pSDIRK4}. The results for the linearization \eqref{linB1} (blue)
and \eqref{linB3} (green) are similar to those of the row above, providing second order
and third order performance (respectively), although the underlying method is fourth order.
This shows clearly that even an excellent linearization will reduce the order of a method
unless it is specially designed to recover higher order accuracy.

In Figure \ref{fig:InviscidBurgers_Perturbed} we turn to the
novel perturbed DIRK methods specially designed to  handle perturbations, 
and compare their performance in the presence of linearizations
to the diagonally-perturbed methods in Figure \ref{fig:convergence_plots}.
The dotted lines represent linearization around 
a different $\bar{y}$  at each stage: $\bar{y} = u^{(i-1)}$.
The dashed lines represent linearization around  $\bar{y} = u_n$. 
The red lines are the results from the  inconsistent linearization \eqref{linB2}. 
The blue lines are the results from the linearization \eqref{linB1} 
that satisfies the local consistency condition \eqref{eq:Tau}. 
The green lines are the results from the linearization \eqref{linB3} 
that satisfies both the local consistency conditions \eqref{eq:Tau}
and \eqref{eq:TauPrime}.

\begin{figure}[htb]
    \centering
    \begin{subfigure}[b]{\textwidth}
        \centering
        \includegraphics[width=0.49\textwidth]{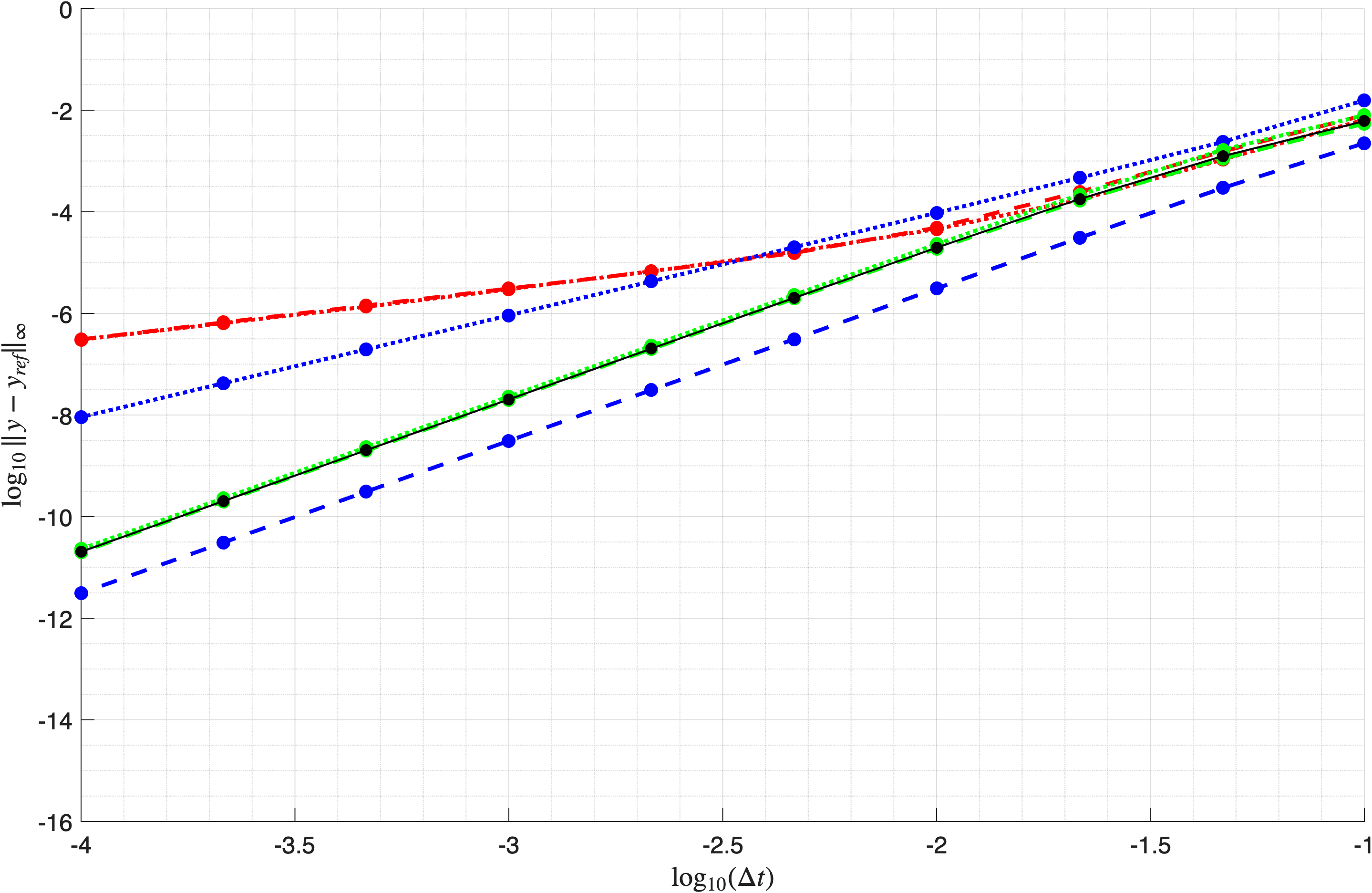}
        \includegraphics[width=0.49\textwidth]{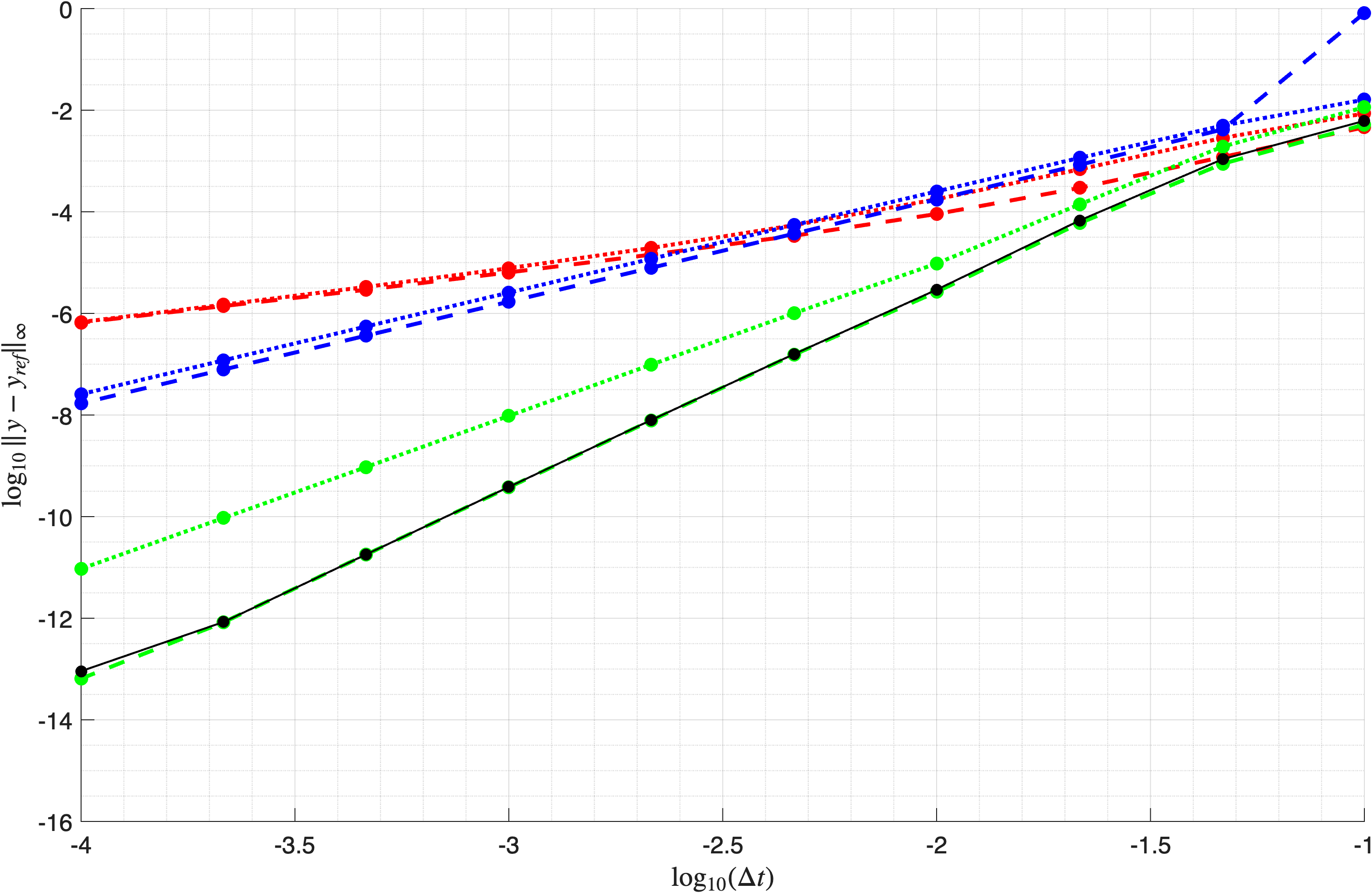}
    \end{subfigure} \\
    \begin{subfigure}[b]{\textwidth}
        \centering
        \includegraphics[width=0.49\textwidth]{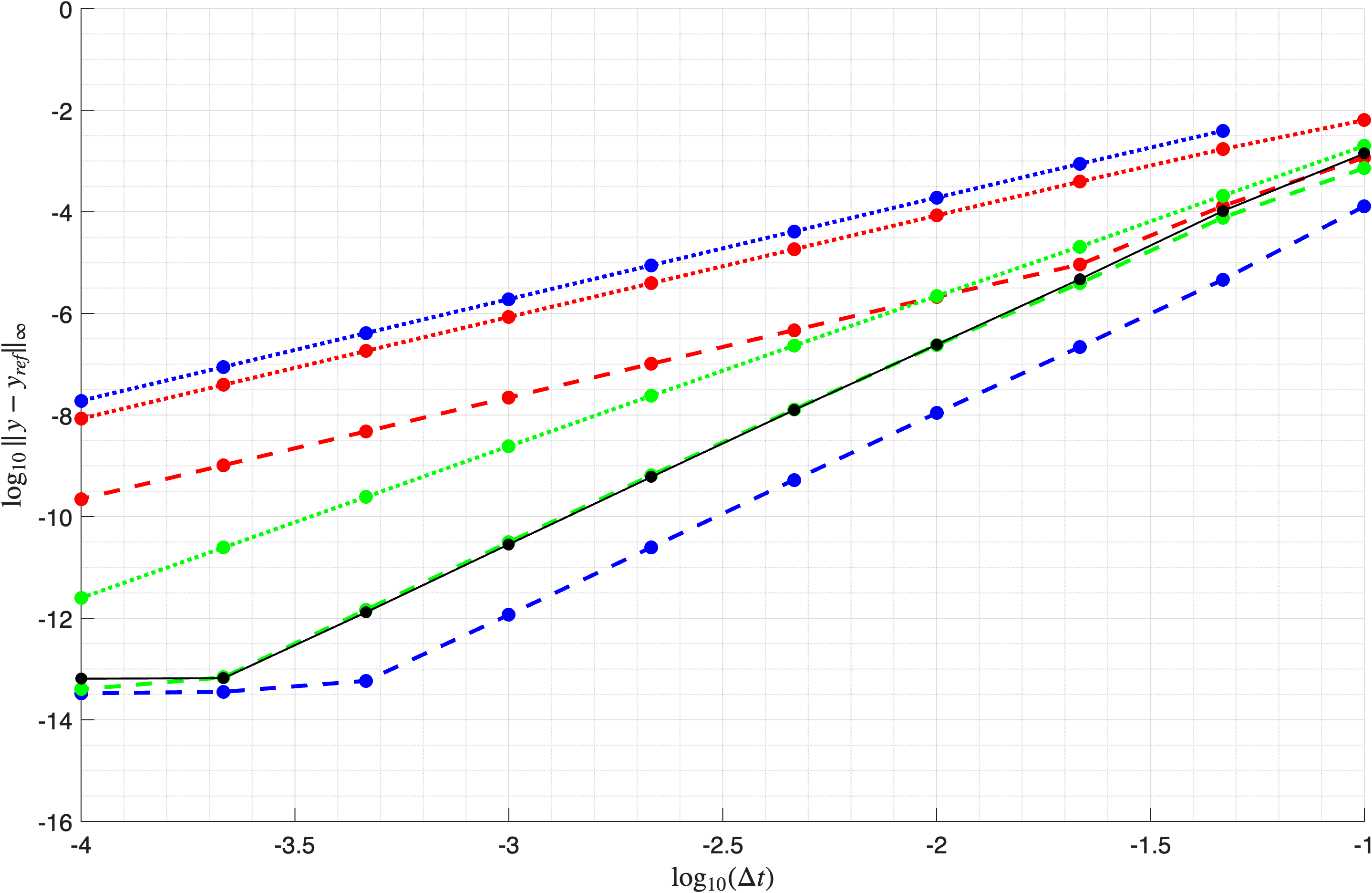}
        \includegraphics[width=0.49\textwidth]{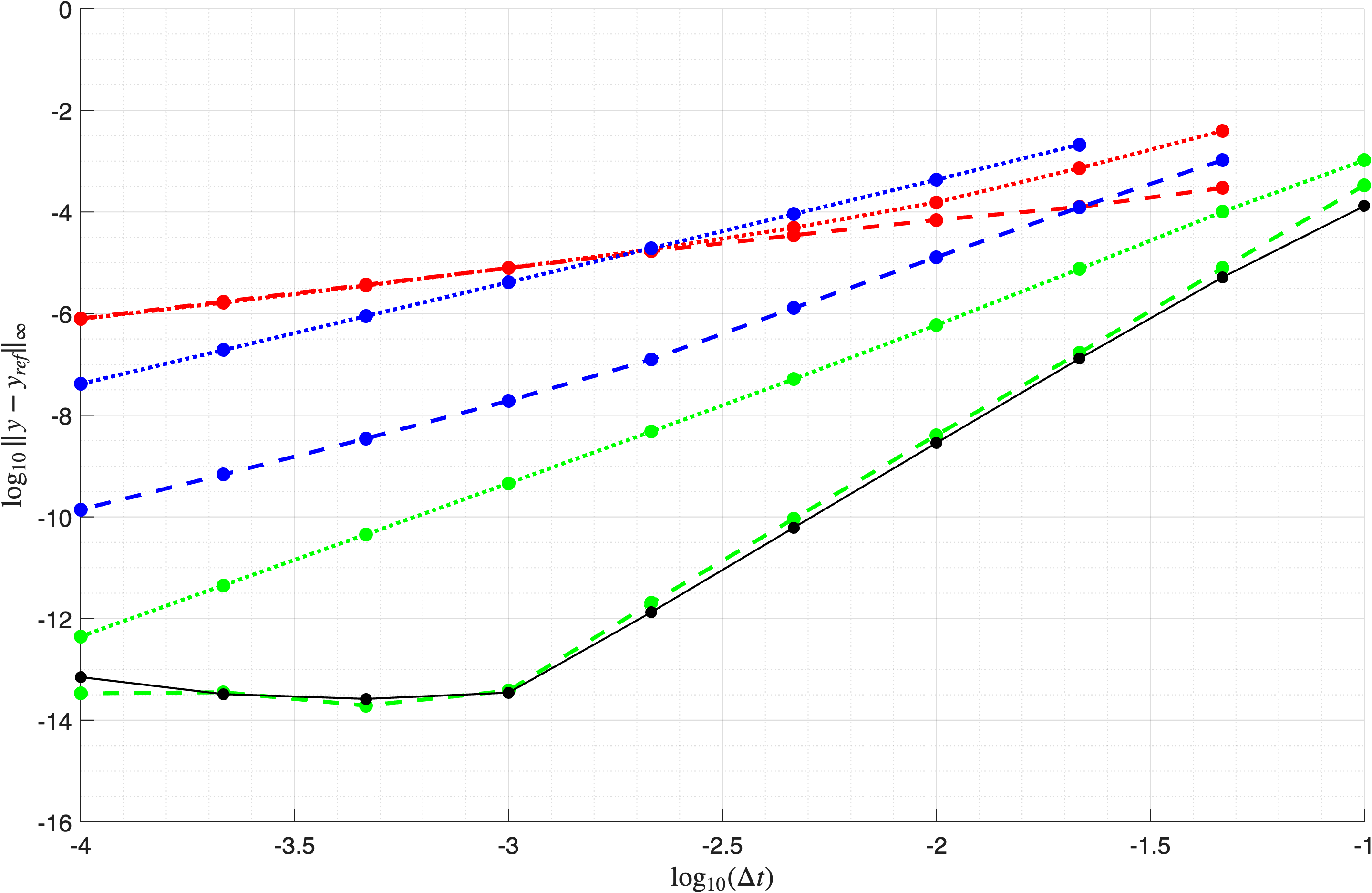}
    \end{subfigure}
    
    \begin{minipage}[c]{0.3\textwidth}  \hspace{0.1in}
        \includegraphics[width=0.85\textwidth]{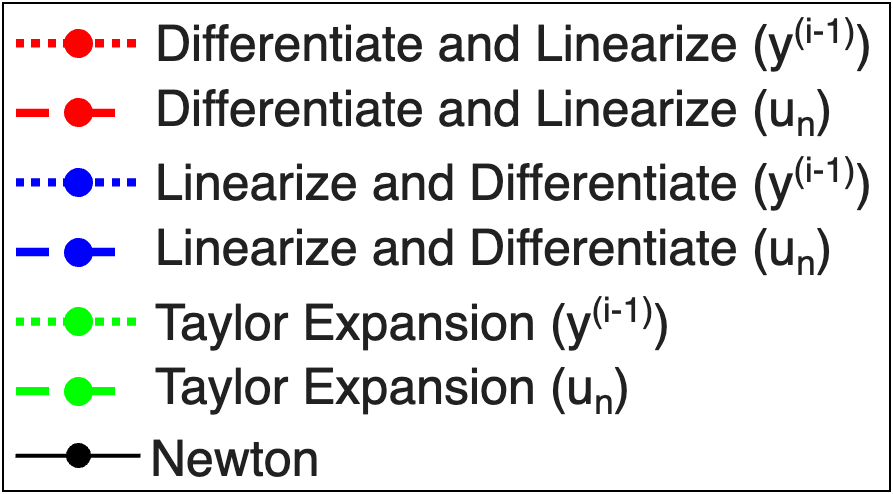}
    \end{minipage} \hfill
    \begin{minipage}[c]{0.65\textwidth} 
        \caption{Burgers' equation:  final time errors using the novel perturbed SDIRK methods:
         \\
        Top Left: A2s3p3m \eqref{eq:A2s3p3m} \\
         Bottom Left: A4s4p4m \eqref{eq:A4s4p4m} \\
        Top Right: B3s4p4m \eqref{eq:B3s4p4m} \\
        Bottom Right: B6s5p5m \eqref{eq:B6s5p5m}.\\
        $N_x = 101$ spatial points.
        \label{fig:InviscidBurgers_Perturbed}} 
    \end{minipage} \hfill 
\end{figure}
On the top left of Figure \ref{fig:InviscidBurgers_Perturbed}
we see the results from the  third order A2s3p3m  method \eqref{eq:A2s3p3m} 
with different linearizations.  For the inconsistent linearization \eqref{linB2}  (red)
there is no difference whether the linearization is around $u_n$ or $u^{(i-1)}$.
This is because the linearization is inconsistent (except as $N_x \rightarrow 0$)
so  will perform at  first order regardless of $\bar{y}$.
For the consistent Taylor series linearization \eqref{linB3} there is also no difference
whether the linearization is around $u_n$ or $u^{(i-1)}$, because the linearization error is such that 
the method  perform at third order as long as  $\bar{y}$ is close
enough to $u_n$. However, for  the linearization \eqref{linB1}  (blue)
the linearization at $\bar{y} = u_n$ (dashed lines) results in higher order 
(third order) and smaller  errors than using  $\bar{y} = u^{(i-1)}$ at each stage 
(dotted lines). 

To understand the impact of the point $\bar{y}$, 
consider that the B-series of this method contains the term 
$\Delta t^2 \vb \cep f_u(u_n) \tau(u^{(i-1)}, u_n)\neq 0$. 
 Although we cannot eliminate this term, we  still benefit from 
 the special cancellations up to a point.  
 For linearization \eqref{linB1} we can expect  $\tau(u^{(i-1)},u_n)=O(\Delta t)$ 
 which explains the added order of accuracy, with the local perturbation term 
 becoming $O(\Delta t^3)$ locally therefore $O(\dt^2)$ globally.  
 For linearization \eqref{linB3} we also observe interesting behavior:
\begin{eqnarray*}
    \tau^3(u^{(i-1)},u_n) & = & f_\varepsilon(u^{(i-1)},u_n) - f(u_n) \\
    & = & f(u^{(i-1)})  + f'(u^{(i-1)})  (u_n - u^{(i-1)} ) - f(u_n) \\
    & = & f\left(u_n + \dt \sum_{j=1}^{i-1} a_{i-1,j} f(u^{(j)}) \right)  
    - f'(u^{(i-1)})  (\dt \sum_{j=1}^{i-1} a_{i-1,j} f(u^{(j)}) ) - f(u_n) \\
    & = & f(u_n) + \dt \sum_{j=1}^{i-1} a_{i-1,j} f(u^{(j)}) f'(u_n) + O(\dt^2)  \\
    &&   - (\dt \sum_{j=1}^{i-1} a_{i-1,j} f(u^{(j)}) ) f'(u^{(i-1)})  - f(u_n) \\
   & = & \dt \sum_{j=1}^{i-1} a_{i-1,j} f(u^{(j)}) f'(u_n) -
    \dt \sum_{j=1}^{i-1} a_{i-1,j} f(u^{(j)})  f'(u_n)
   + O(\dt^2)  \\
   & = & O(\dt^2).
\end{eqnarray*}
So we can expect  $\tau(u^{(i-1)},u_n)=O(\Delta t^2)$, and similarly we can expect
 $\tau_u(u^{(i-1)},u_n)=O(\Delta t)$. 
 This ensures the second order perturbation term is damped by $O(\Delta t^2)$ 
 and the third order perturbation terms is damped by at least $O(\Delta t)$.

On the bottom left of Figure \ref{fig:InviscidBurgers_Perturbed} 
are the results from the  A4s4p4m method \eqref{eq:A4s4p4m},
We see that the inconsistent linearization \eqref{linB2}  results in second order 
errors regardless of the value it is linearized around (red lines).  
The linearization \eqref{linB1} will satisfy the local consistency condition
\eqref{eq:Tau} when linearized around $\bar{y} = u_n$, and so the 
A4s4p4m method \eqref{eq:A4s4p4m} performs as designed at fourth order in this 
case (blue dashed line), and in fact has the smallest errors. 
However, when the linearization is performed around $\bar{y} = u^{(i-1)}$ 
it no longer satisfies the local consistency condition
\eqref{eq:Tau}  and so the convergence rate drops to second order (blue dotted line).
Finally, for the Taylor series linearization \eqref{linB3} we see third order when 
$\bar{y} = u^{(i-1)}$ (green dotted line)
and, as expected, fourth order when $\bar{y} = u_n$ (green dashed line). Note that 
this performs exactly as the Newton iteration would for the unperturbed base method
(black line).

On the top right of Figure \ref{fig:InviscidBurgers_Perturbed}  
we have the results from the B3s4p4m method \eqref{eq:B3s4p4m},
which requires that the perturbation satisfy both local consistency conditions
\eqref{eq:Tau} and \eqref{eq:TauPrime} to attain the design order.
We observe that as before the inconsistent linearization  \eqref{linB2}  
results in first order  errors regardless of the value it is linearized around (red lines).
The linearization  \eqref{linB1}  results in second order 
errors regardless of the value it is linearized around (blue lines).
Finally, for the Taylor series linearization \eqref{linB3} we see third order when 
$\bar{y} = u^{(i-1)}$ (green dotted line) and, as expected, fourth order 
when $\bar{y} = u_n$ (green dashed line). Once again this performs exactly 
as the Newton iteration would for the unperturbed base method (black line).

Finally, at the bottom right we see the results from the B6s5p5m method \eqref{eq:B6s5p5m} 
which requires that the perturbation satisfy both local consistency conditions
\eqref{eq:Tau} and \eqref{eq:TauPrime}.
The inconsistent linearization  \eqref{linB2}  results in first order 
errors regardless of the value it is linearized around (red lines).
However, for larger $\dt$ the linearization around $\bar{y} = u_n$ (dashed line) 
is more accurate than the linearization around $\bar{y} =u^{(i-1)}$ (dotted line).
The linearization  \eqref{linB1}  results in second order 
errors regardless of the value it is linearized around (blue lines), though 
the linearization around $\bar{y} = u_n$ (dashed line) is more accurate than
the linearization around $\bar{y} =u^{(i-1)}$ (dotted line).
Finally, for the Taylor series linearization \eqref{linB3} we see third order when 
$\bar{y} = u^{(i-1)}$ (green dotted line) and, as expected, fifth order 
when $\bar{y} = u_n$ (green dashed line). Once again this performs exactly 
as the Newton iteration would for the unperturbed base method (black line).

We note that all the novel perturbed fourth and fifth order DIRK methods were tested for stability by 
increasing the number of spatial points $N_x$ while retaining a large $\dt$.
For the Taylor series linearization \eqref{linB3} we were not able to observe an unstable 
solution, even for the fifth order method where the underlying method is not B-stable.
We went up to a CFL value $\frac{\dt}{\dx} = 38$ where $N_x = 2400$ and $\dt =0.1$ and we were still able
to get an accurate and stable time-evolution.

 
\subsection{Shallow water equations}


Consider the scalar shallow water equations:
\begin{align}\label{sw1}
\eta_t + (\eta u)_x = 0, \; \; \; \; 
(\eta u)_t + \left( \eta u^2 + \frac{1}{2}\eta^2 \right)_x = 0,
\end{align}
for $x \in [0, 2\pi)$, with initial conditions
$\eta(x, 0) = \frac{\sin(x)}{10} +1$, $u(x, 0) = 0$, and periodic boundary conditions. Here $\eta(x, t)$ denotes the height and $u(x, t)$ the velocity.  
Let $\mu = \eta u$ be the mass flux, then \eqref{sw1}  can be written as 
\begin{align*}
\eta_t + \mu_x = 0,  \; \; \; \; 
\mu_t  + \left( \frac{\mu^2}{\eta} + \frac{1}{2}\eta^2 \right)_x = 0.
\end{align*}
Hence, similarly to the previous example, we semi-discretize this system of equations using a Fourier spectral method differentiation matrix $D_x$, and the function $f(y)$ is given by 
\begin{equation*}
    y' = \begin{pmatrix} y_{\eta}' \\
                y'_{\mu} \end{pmatrix} = f(y) = -\begin{pmatrix}
                    D_xy_\mu\\D_x\left[\frac{y_\mu^2}{y_\eta} + \frac{1}{2}y_\eta^2\right]
                \end{pmatrix}.
 \end{equation*}
 Given $y_n=  \bar{y} = \begin{pmatrix}\bar{{y}}_\eta\\\bar{y}_\mu
\end{pmatrix}$, let
$
\bar{Y_\eta} = \text{diag}(\bar{y_\eta}) \quad \mbox{and} \quad \bar{Y}_{\mu/\eta} = \text{diag}(\bar{y}_\mu/\bar{y_\eta}).
$

\begin{figure}[htb]
    \centering
\includegraphics[width=0.46\textwidth]{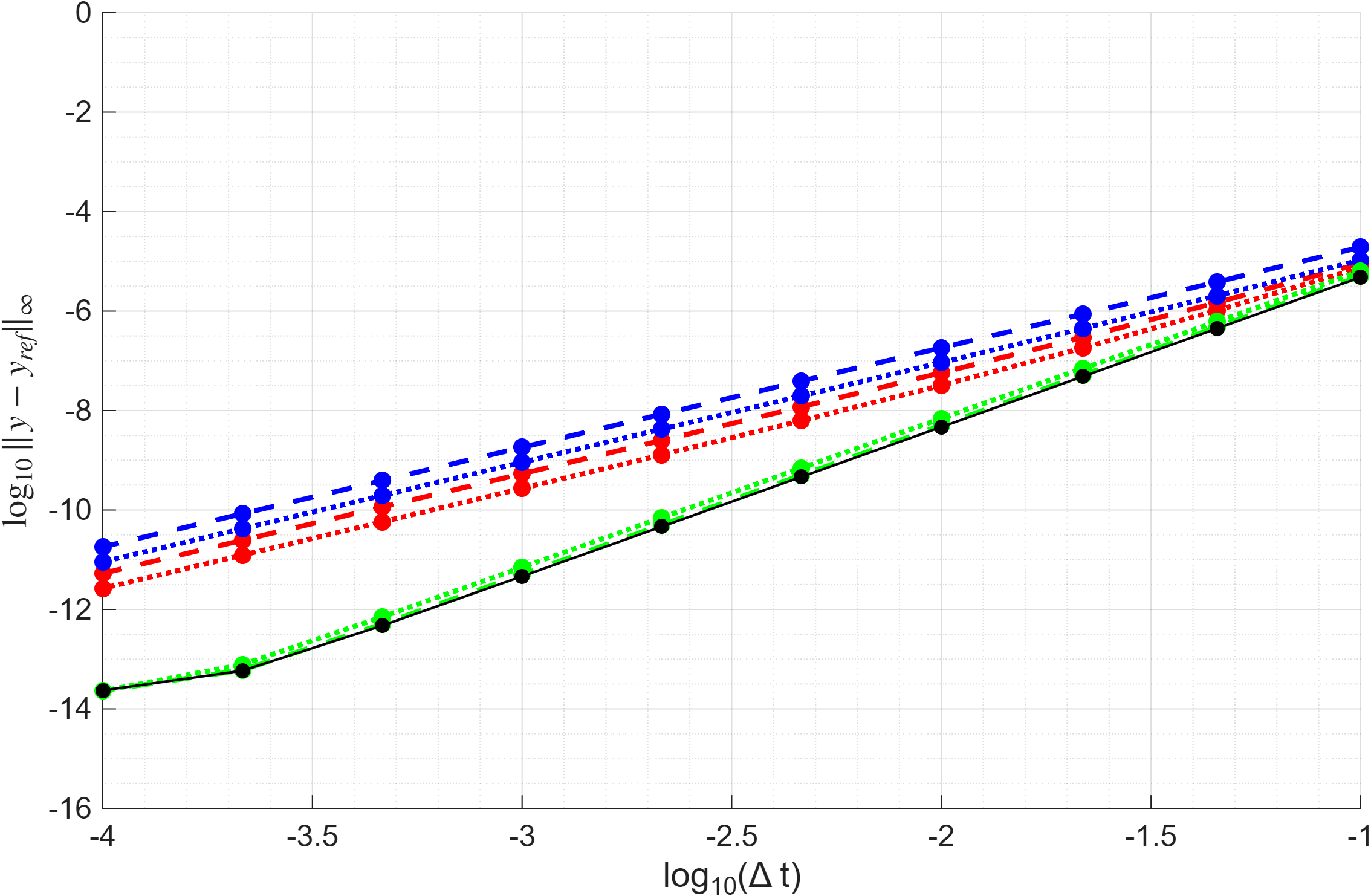}
\includegraphics[width=0.46\textwidth]{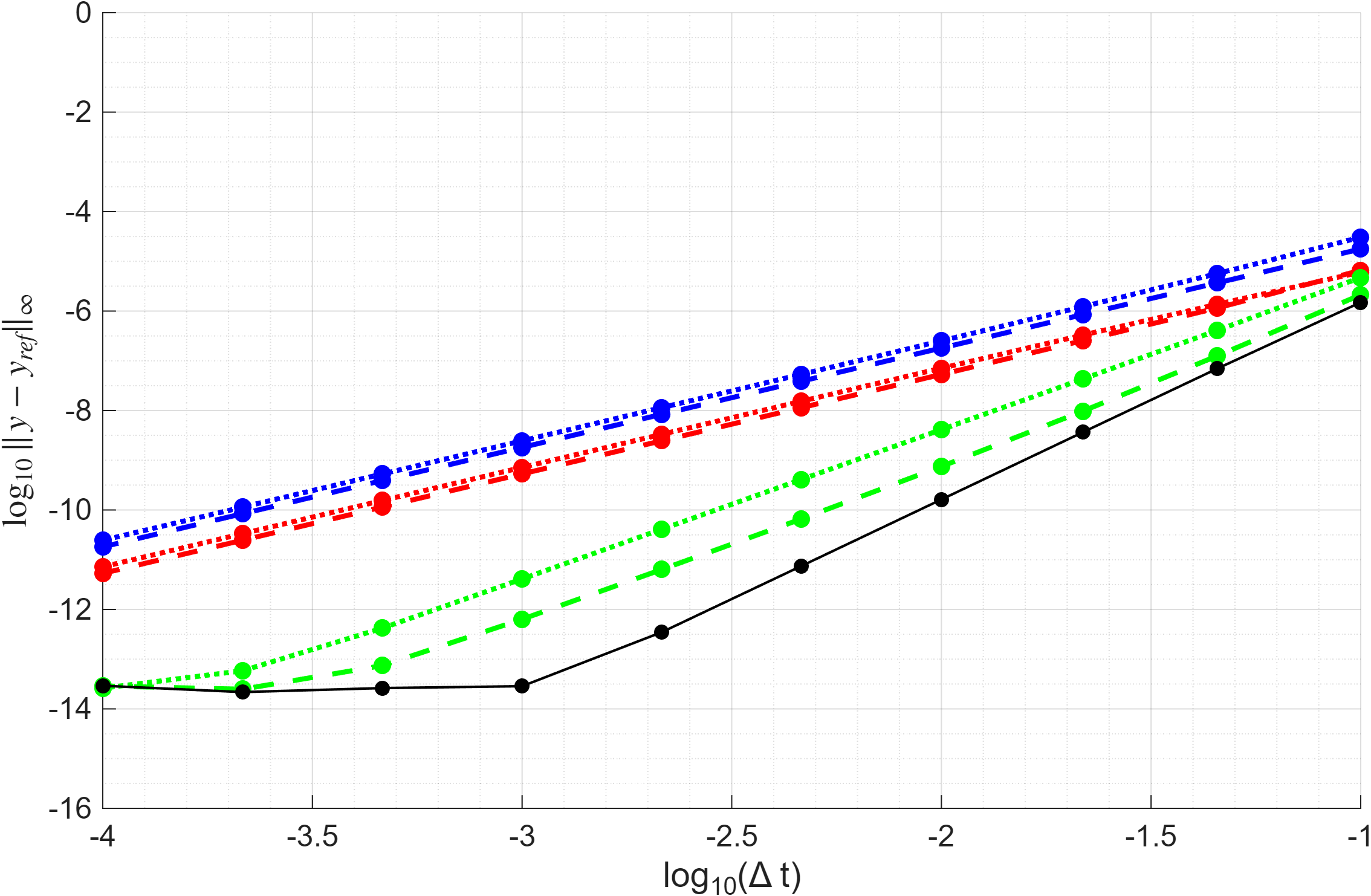}
    \\
\includegraphics[width=0.46\textwidth]{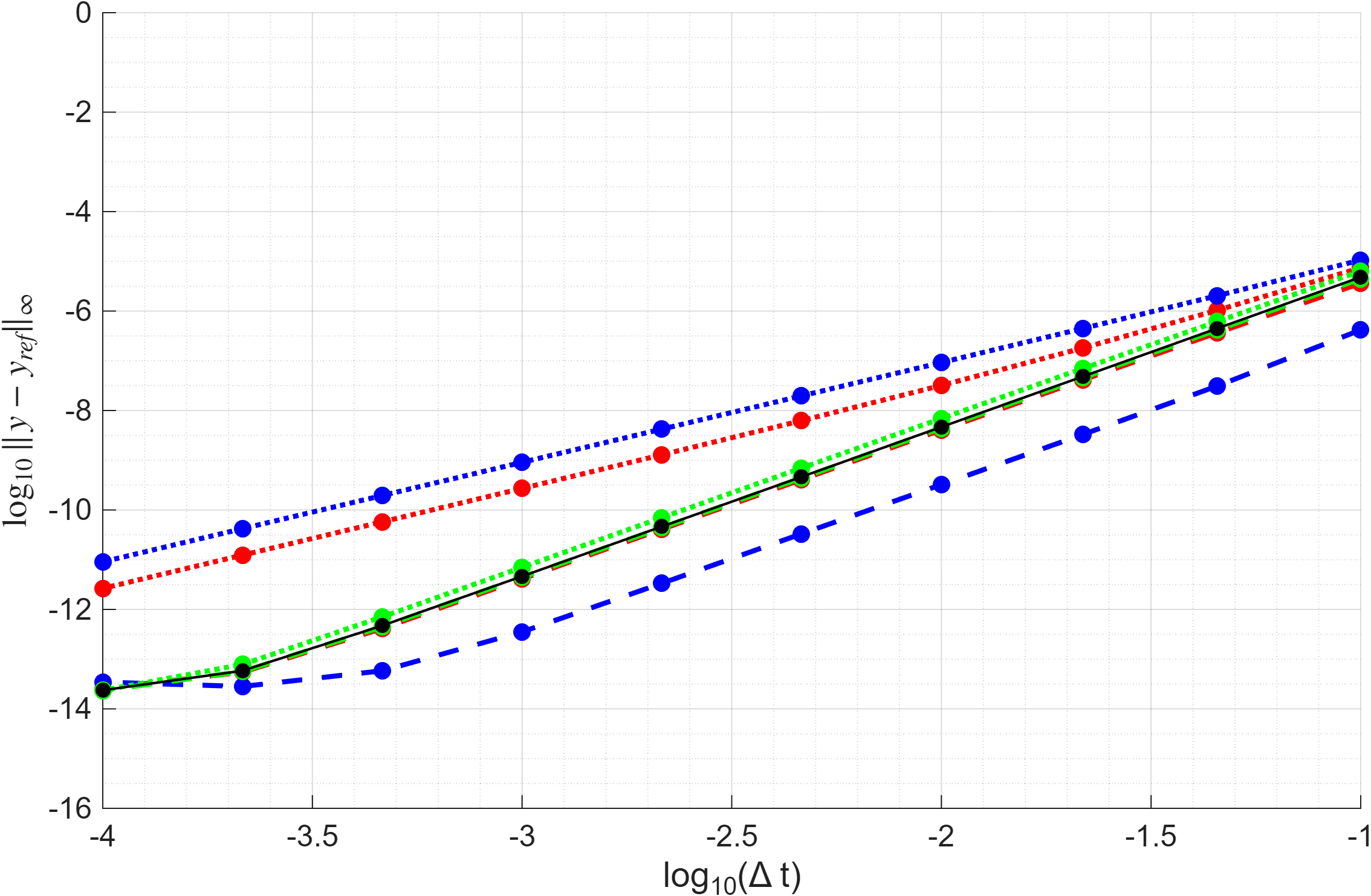}
\includegraphics[width=0.46\textwidth]{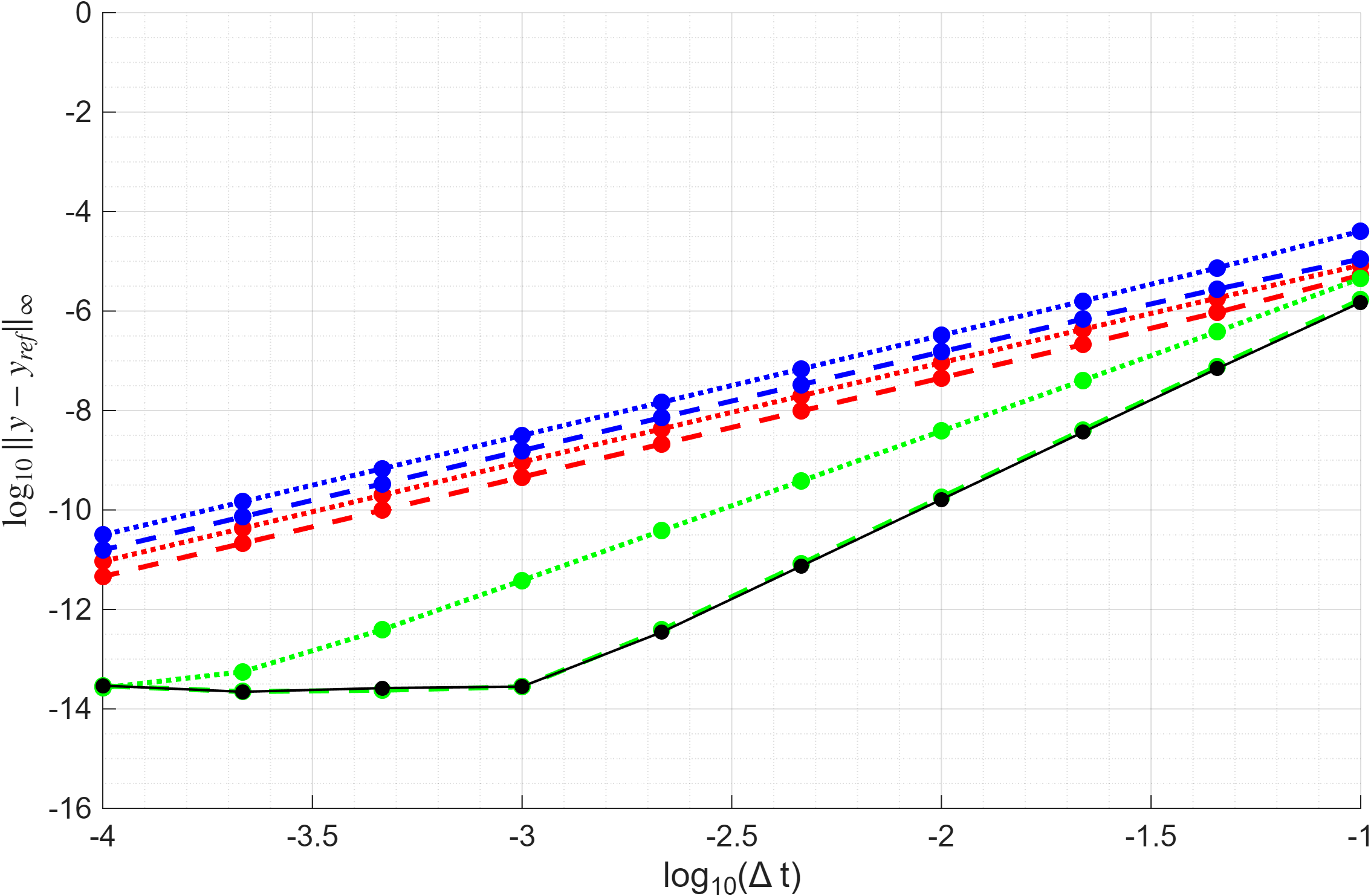} \\
    \begin{minipage}[c]{0.3\textwidth}  \hspace{0.1in}
        \includegraphics[width=0.85\textwidth]{Plots/PM_B_Lin/Master_Legend2.png}
    \end{minipage} \hfill
    \begin{minipage}[c]{0.65\textwidth} 
        \caption{Shallow water equations: final time errors for different linearizations 
        with
        using $N_x = 101$ spatial points. \\
        Top: diagonally perturbed methods D2s3p1m (left) and D3s4p1m method (right).
Bottom: new perturbed methods  A2s3p3m (left) and B3s4p4m method (right).
        \label{fig:ShallowWater}} 
    \end{minipage} \hfill 
\end{figure}

Similarly to the approach used previously for Burgers' equation, we compare the following three different linearizations around $\bar{y}$:
     \begin{eqnarray}
     \mbox{Linearize first: \; \; \; } 
     &    g_1(y) =  -\begin{pmatrix}
                    D_xy_\mu\\D_x\bar{Y}_{\mu/\eta}y_\mu + \frac{1}{2}D_x\bar{Y_\eta}y_\eta
                \end{pmatrix}. \label{linSW1} \\
    \mbox{Differentiate first:\;  } 
       &  g_2(y) =  -\begin{pmatrix}
                    D_xy_\mu\\2\bar{Y}_{\mu/\eta}D_xy_\mu + \left[  \bar{Y_\eta} - \left(\bar{Y}_{\mu/\eta}\right)^2\right]D_xy_\eta
                \end{pmatrix}, \label{linSW2} \\
       \mbox{Taylor series:} \; \;
       &  g_3(y)  = -\begin{pmatrix}
                    D_x\bar{y}_\mu\\D_x\left[\frac{\bar{y}_\mu^2}{\bar{y}_\eta} + \frac{1}{2}\bar{y}_\eta^2\right]
                \end{pmatrix} + f'(\bar{y})\begin{pmatrix}
                    y_\eta - \bar{y}_\eta\\ y_\mu - \bar{y_\mu}
                \end{pmatrix}, \label{linSW3} 
     \end{eqnarray}
     where 
     \begin{equation*}
         f'(\bar{y}) = \begin{pmatrix}
             {\bf 0} & -D_x\\D_x \left[\left(\bar{Y}_{\mu/\eta} \right)^2  - \bar{Y_\eta}  \right] 
             &-2D_x \bar{Y}_{\mu/\eta}
         \end{pmatrix}.
         \end{equation*}

In Figure  \ref{fig:ShallowWater} we compare the accuracy of the diagonally perturbed methods previously used
(top)  with our novel perturbed DIRK methods (bottom). 
Figure  \ref{fig:ShallowWater} (left) shows the final time errors using
        $N_x = 101$ spatial points, when evolved with the following
        diagonally perturbed method D2s3p1m  \eqref{pSDIRK3} (top left) 
        and the new perturbed method  A2s3p3m  \eqref{eq:A2s3p3m} (bottom left).
        Each of the linearizations \eqref{linSW1} (blue), \eqref{linSW2} (red), and \eqref{linSW3} (green)
        are performed for $\bar{y} = u_n$ (dashed lines) and $\bar{y} = u^{(i-1)}$ (dotted lines).
        
        We observe that for the Taylor series linearization \eqref{linSW3} the third order methods (left) 
        demonstrate third order methods, regardless of $\bar{y}$, 
        and for the linearization \eqref{linSW2} (red ) we see  second order regardless of $\bar{y}$.
        However, the difference is evident  for linearization \eqref{linSW1} (blue) which is second order regardless of $\bar{y}$ for the diagonally perturbed method D2s3p1m (top left), 
        and is second order for $\bar{y} = u^{(i-1)}$ (dotted lines) 
        for the new method A2s3p3m (bottom left), but is third order (and 
        results in even smaller errors than the linearization \eqref{linSW3} or exact evolution in black)
        when linearized around 
        $\bar{y} = u_n$ (dashed lines) and evolved with the new method A2s3p3m. This is because
        the diagonally perturbed method does not cause cancellations of the perturbations which 
        are a design feature of A2s3p3m when the linearization is consistent at  $\bar{y} = u_n$.

In Figure  \ref{fig:ShallowWater} on the right we see the diagonally perturbed fourth order method
D3s4p1m \eqref{pSDIRK4} top right), and the new perturbed method B3s4p4m \eqref{eq:B3s4p4m}
(bottom right).
We observe that for the diagonally perturbed D3s4p1m method  \ref{pSDIRK3} (top), 
the Taylor series linearization \eqref{linSW3} performs at third order, 
regardless of the choice of $\bar{y}$. 
However,  for the new perturbed method B3s4p4m \eqref{eq:B3s4p4m} (bottom) 
we see fourth order convergence when we  linearize around $\bar{y} = u_n$ (dashed lines).
The other linearizations all perform at second order, as expected.
This example shows the power of the new perturbed methods  A2s3p3m \eqref{eq:A2s3p3m} and
B3s4p4m \eqref{eq:B3s4p4m} when used with a perturbation that satisfies the correct 
local consistency conditions.


\subsection{A porous medium equation}


Our final example is the nonlinear equation 
 \begin{eqnarray}  \label{eq:PM}
 \mathcal{U}_t  = (\mathcal{U}^3)_{xx},
 \end{eqnarray}
 on the domain $x = [-\pi,\pi)$, 
 with initial condition $u(x,0)= \frac{1}{2} \cos(x)+  \frac{1}{2} $ 
 and periodic boundary conditions. Once again we use a spectral differentiation matrix
 for the spatial discretization, and evolve the resulting ODE system
  $ u' = f(u) =   D_{xx} u^{3}.$
 using the three diagonally perturbed  time-stepping methods
D1s2p1m \eqref{pIMR}, D2s3p1m \eqref{pSDIRK3}, and D3s4p1m \eqref{pSDIRK4},
and the novel methods in Section \ref{sec:NovelMethods}, 
to a final time $T_f = 0.5$.

 We can linearize $f$  in the following ways:
   \begin{eqnarray}
    \mbox{linearize first (blue):} \; \; 
   & f_{\epsilon}^1(\bar{y},u) =   D_{xx} \bar{Y}^2 u,  \; \;  \label{linPM1} \\
 \mbox{differentiate first (dark red):} \; \; 
  & f_{\epsilon}^{2,1}(\bar{y},u) =    3 D_{x} \left( \bar{Y}^2 D_x u \right),   \; \;  \label{linPM2a} \\
      \mbox{differentiate twice (orange): \; } 
   & f_{\epsilon}^{2,2}(\bar{y},u)  =   6 \bar{Y} \left( D_{x} \bar{y} \right) \odot 
   \left(D_x u \right)
   + 3\bar{Y}^2 D_{xx} u, \label{linPM2b} \\
    \mbox{Taylor series (green): \; \; } 
   & f_{\epsilon}^3(\bar{y},u) =   D_{xx} \bar{y}^{3}   + 3D_{xx}\bar{Y}^2(u-\bar{y}), 
 \label{linPM3}
   \end{eqnarray}

\begin{figure}[t!]
    \centering
    \begin{subfigure}[b]{\textwidth}
        \centering
        \includegraphics[width=0.49\textwidth]{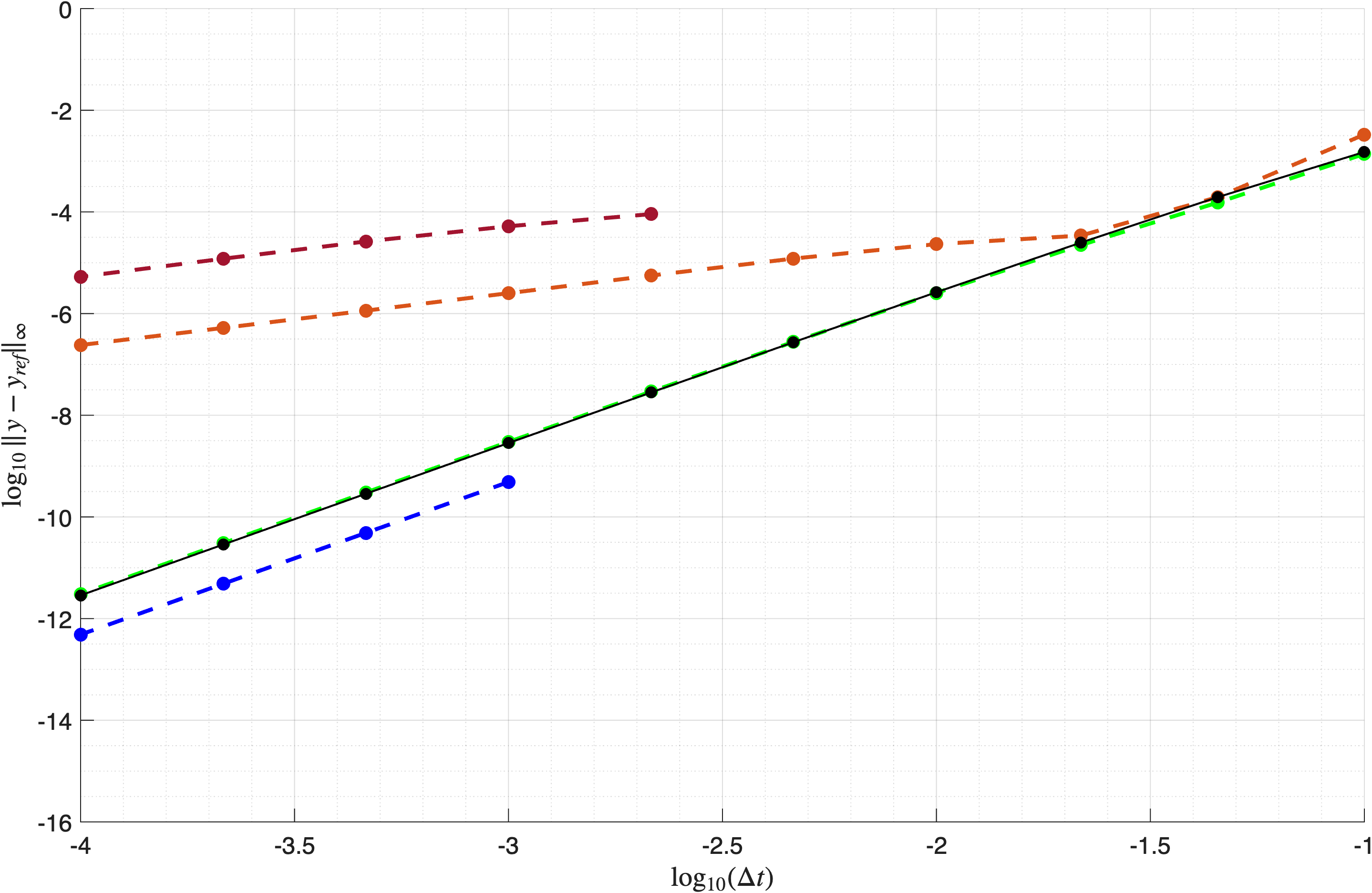}
        \includegraphics[width=0.49\textwidth]{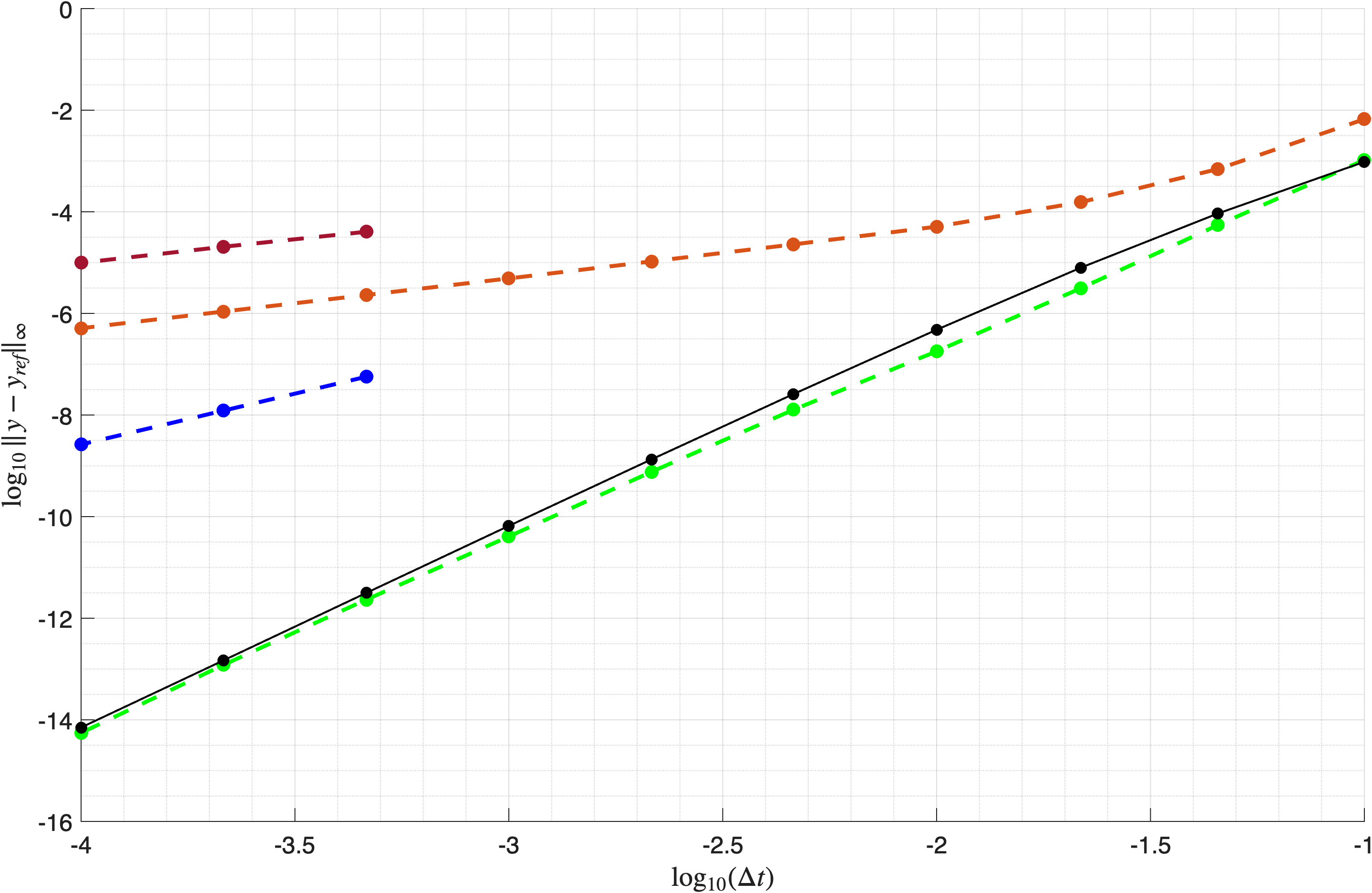}
    \end{subfigure} \\
    \begin{subfigure}[b]{\textwidth}
        \centering
        \includegraphics[width=0.49\textwidth]{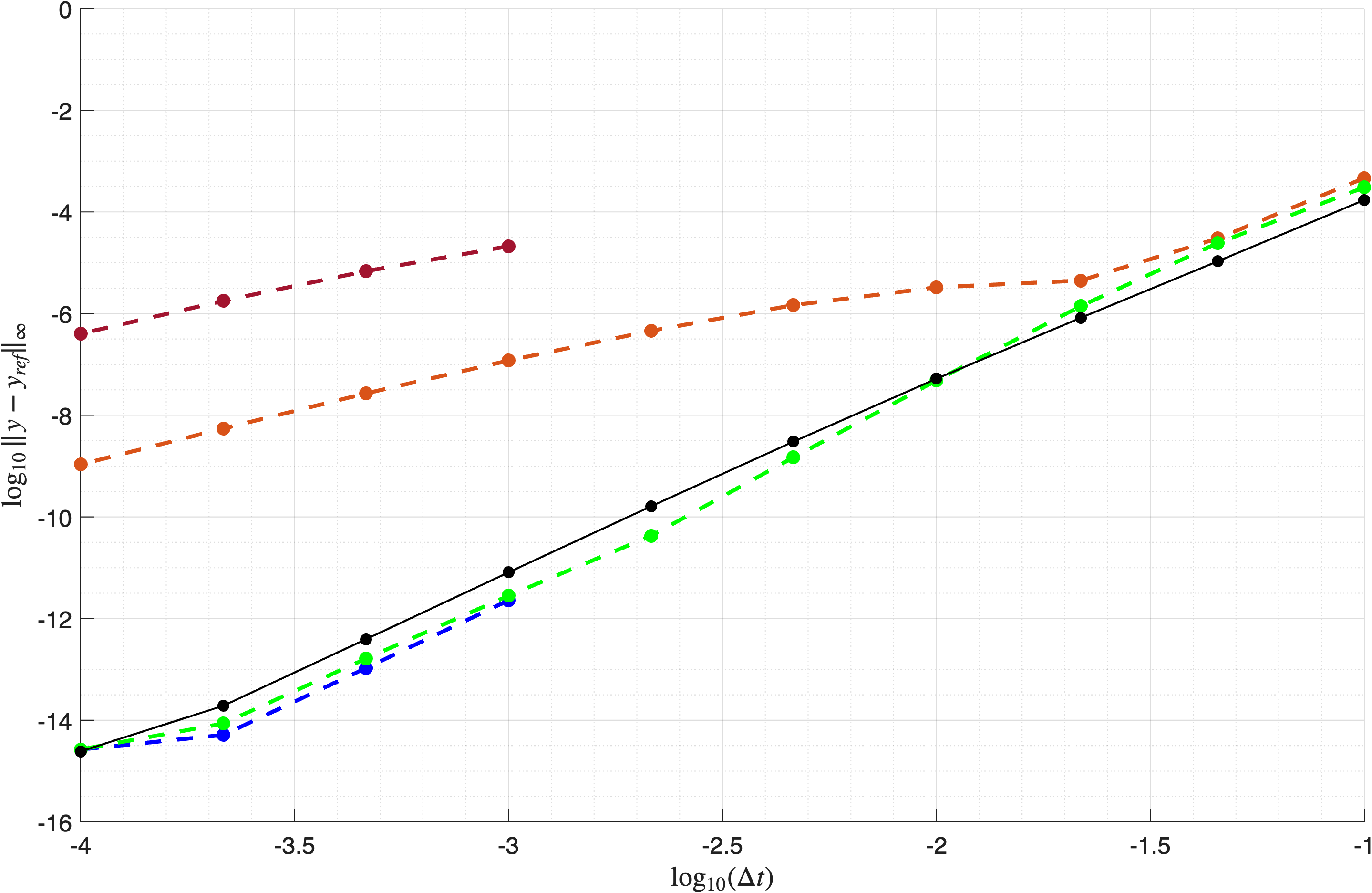}
        \includegraphics[width=0.49\textwidth]{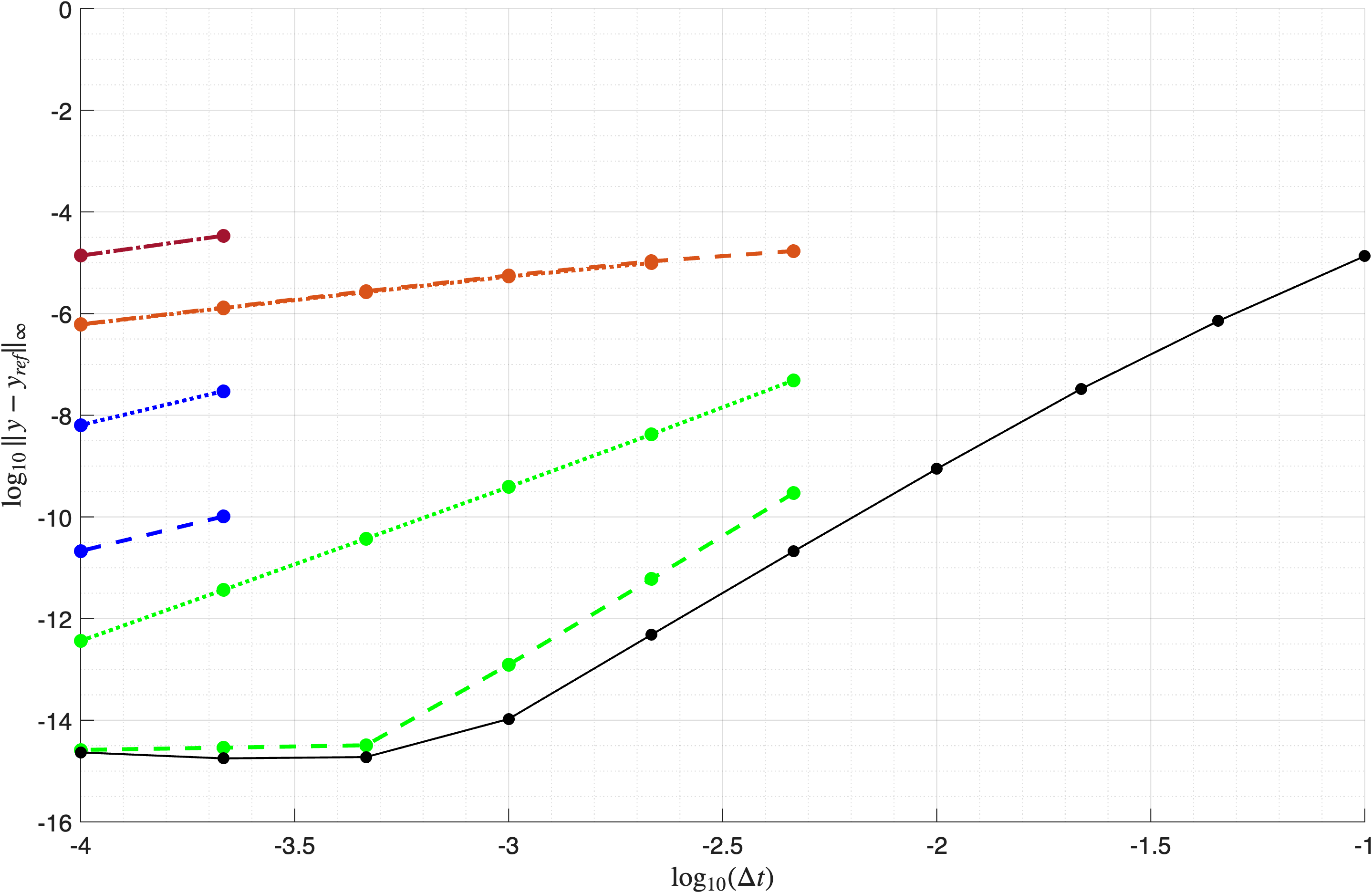}
    \end{subfigure}
    \begin{minipage}[c]{0.3\textwidth}  \hspace{0.1in}
        \includegraphics[width=0.85\textwidth]{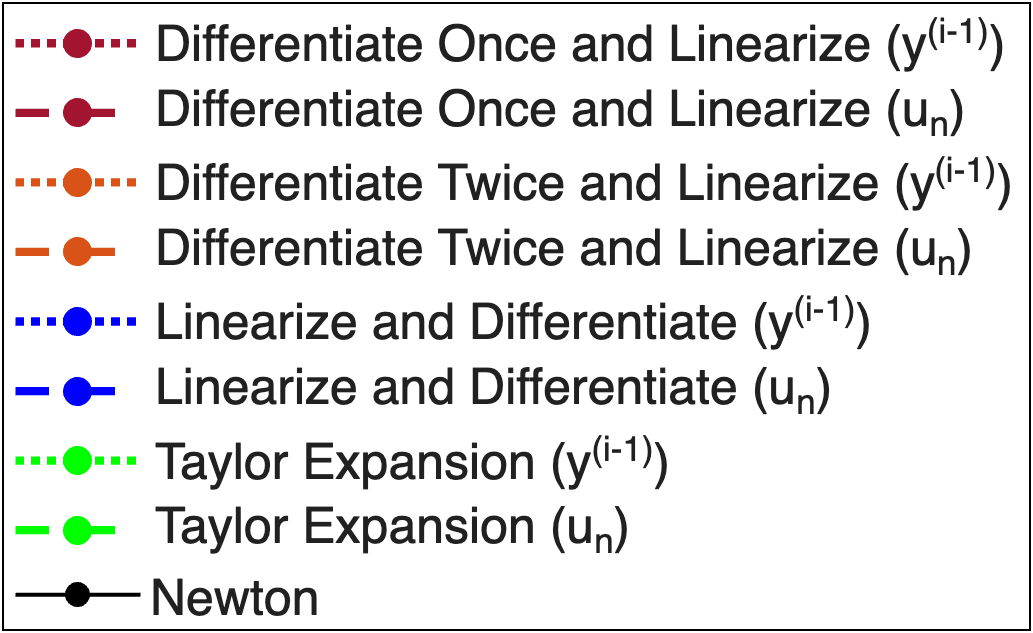}
    \end{minipage} \hfill
    \begin{minipage}[c]{0.65\textwidth} 
        \caption{Porous Medium equation: final time errors for different linearizations 
        with $N_x = 101$ spatial points. \\
        Top Left: A2s3p3m method \eqref{eq:A2s3p3m}, \\
        Top Right: B3s4p4m method \eqref{eq:B3s4p4m}. \\
        Bottom Left: A4s4p4m method \eqref{eq:A4s4p4m}, \\
        Bottom Right: B6s5p5m method  \eqref{eq:B6s5p5m}.
        \label{fig:PorousMedium_Perturbed}} 
    \end{minipage} \hfill 
\end{figure}

Figure \ref{fig:PorousMedium_Perturbed} shows the performance of the 
new methods in Section \ref{sec:NovelMethods}.
For all the methods we see that the linearizations \eqref{linPM2a} (red),  
\eqref{linPM2b} (orange) perform at first order, 
but that linearization  \eqref{linPM2a} is not as stable as  \eqref{linPM2b}.
The linearization \eqref{linPM1} (blue) performs at design order for the 'A' methods 
A2s3p3m and A4s4p4m, but loses accuracy and stability for the 'B' methods B3s4p4m and B6s5p5m.
The Taylor series linearization \eqref{linPM3} (green) performs at design order for all the methods 
except the fifth order method. However, the fifth order method is not stable for this problem 
for most values of $\dt$, not even for the unperturbed method.
Clearly, this example requires the B-stability property which this method does not satisfy.
Other than this, we see that the novel methods perform well in terms of accuracy and stability
for the Taylor series method.

\begin{figure}[htb]
\centering
\includegraphics[width=0.46\textwidth]{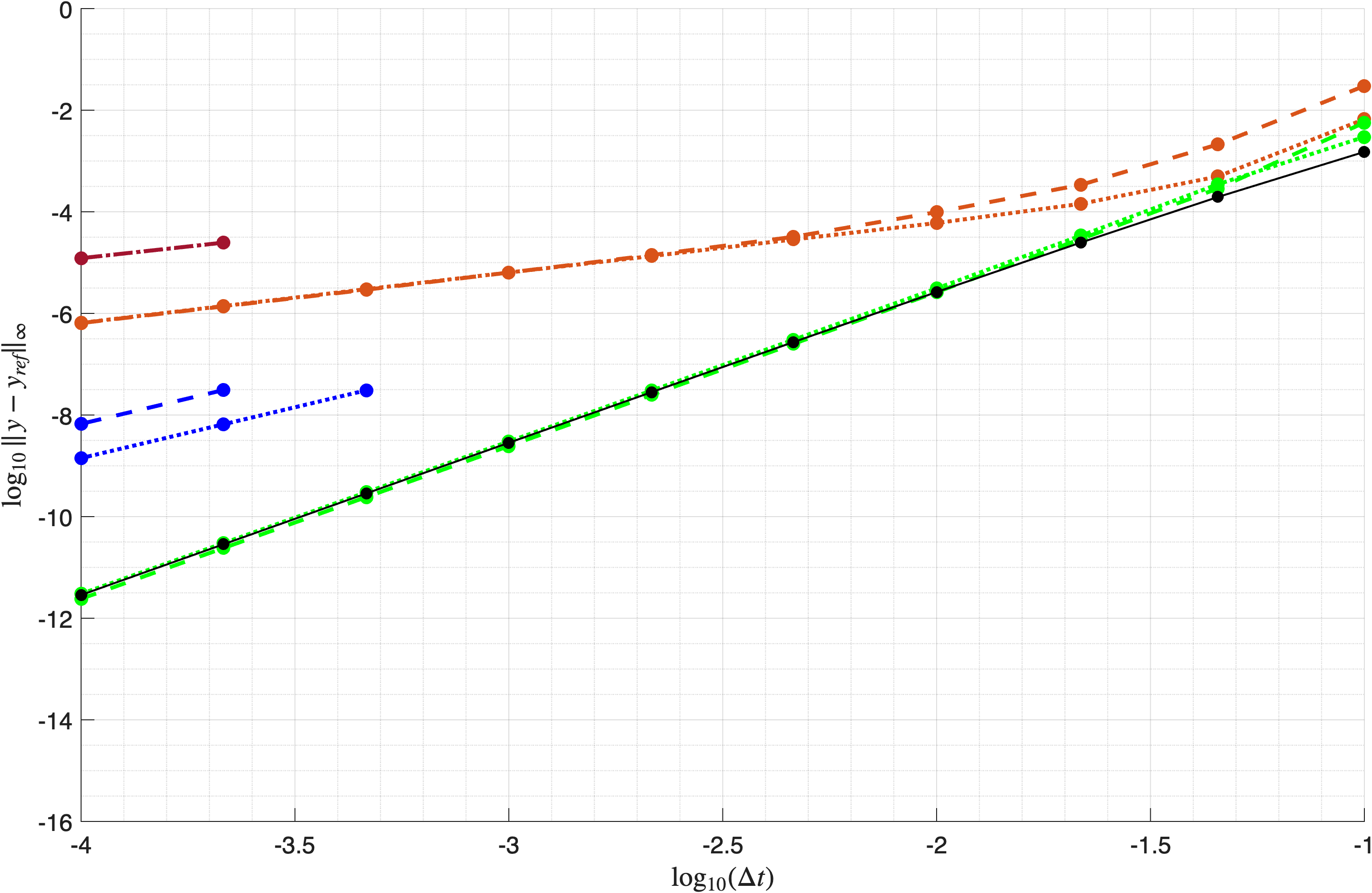}
\includegraphics[width=0.46\textwidth]{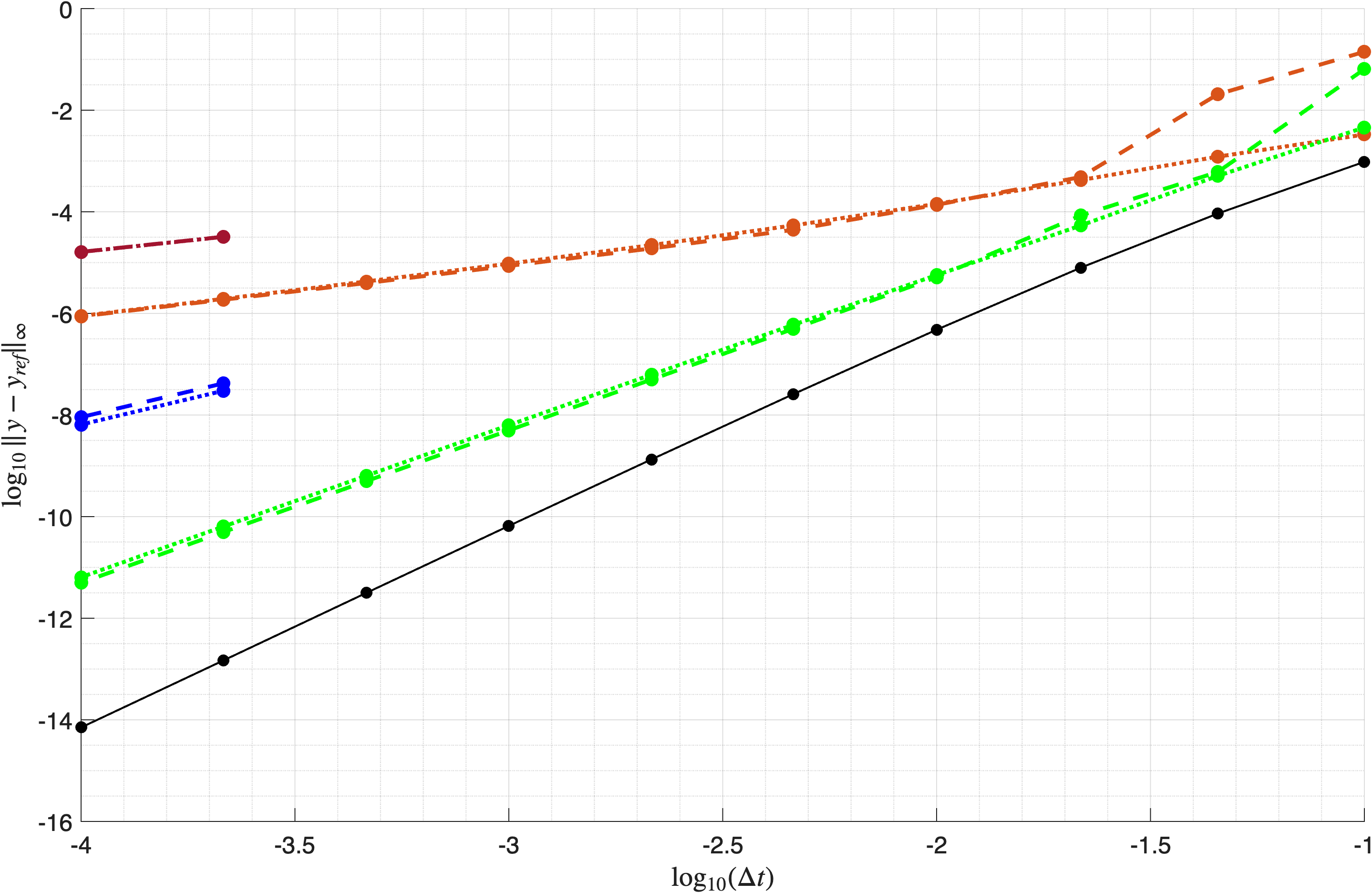}
\\
\includegraphics[width=0.46\textwidth]{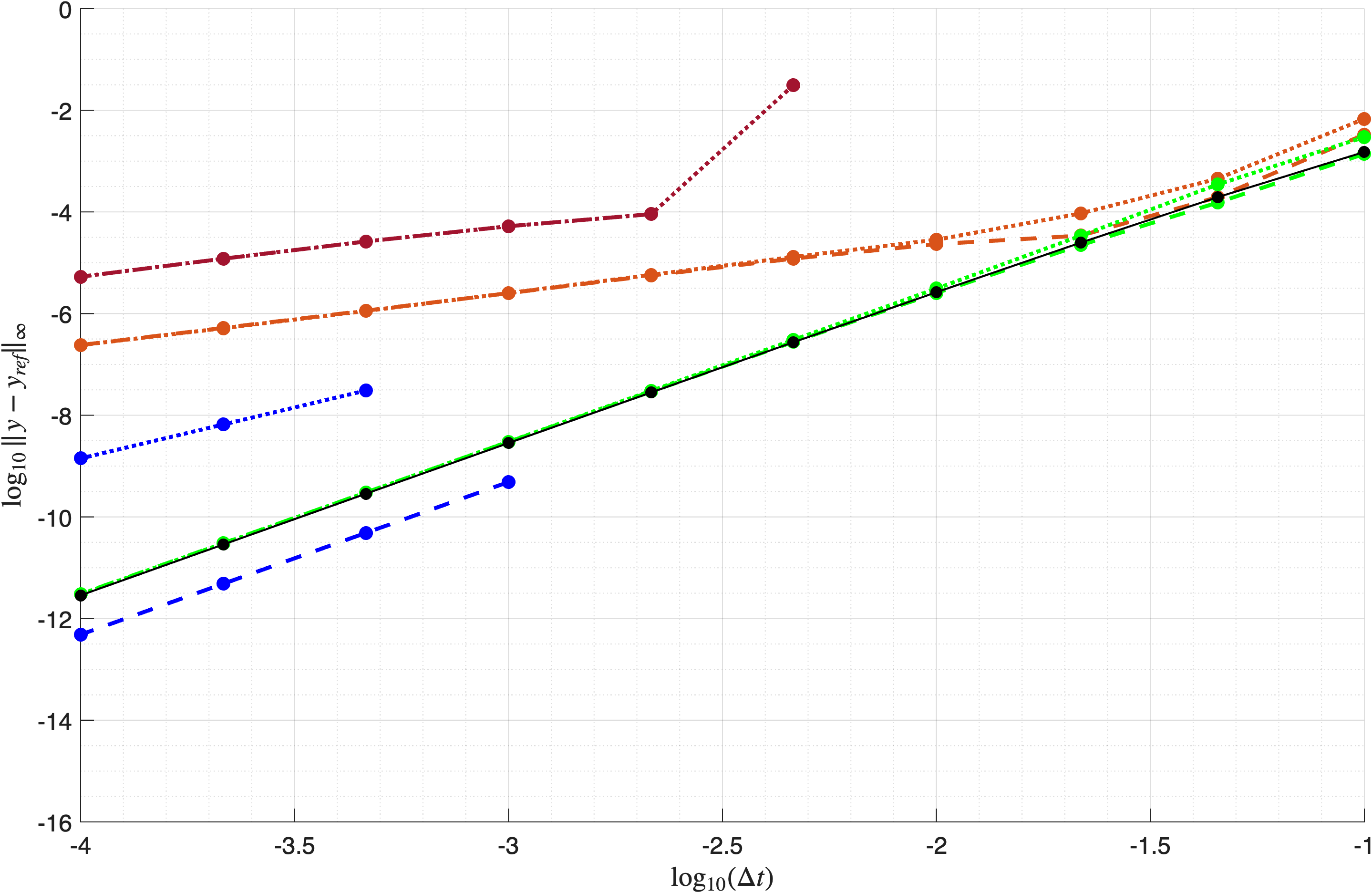}
\includegraphics[width=0.46\textwidth]{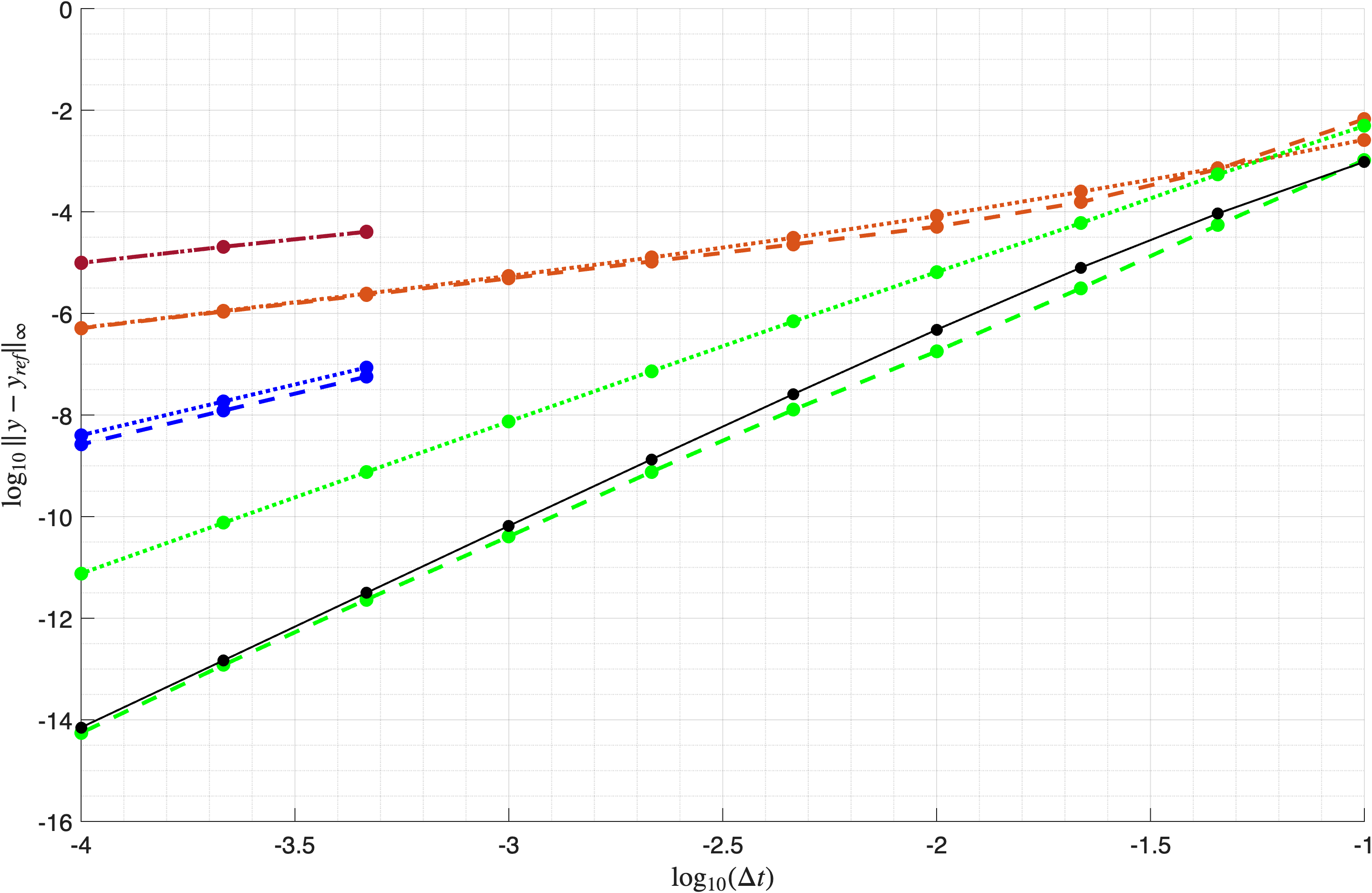}
\caption{Porous medium equation final time errors with  $N_x = 101$ spatial points. 
Top: diagonally perturbed methods D2s3p1m (left) and D3s4p1m method (right).
Bottom: new perturbed methods  A2s3p3m (left) and B3s4p4m method (right).
    \label{fig:PorousMedium_Standard}}
    \end{figure}

    Figure  \ref{fig:PorousMedium_Standard} (left) shows the final time errors using
        $N_x = 101$ spatial points, when evolved with the following
        diagonally perturbed method D2s3p1m  \eqref{pSDIRK3} (top left) 
        and the new perturbed method  A2s3p3m  \eqref{eq:A2s3p3m} (bottom left).
        Each of the linearizations  \eqref{linPM1} (blue),  \eqref{linPM2a} (red), 
        \eqref{linPM2b} (orange), and \eqref{linPM3} (green)
        are performed for $\bar{y} = u_n$ (dashed lines) and $\bar{y} = u^{(i-1)}$ (dotted lines).
        The linearizations  \eqref{linPM2b} (orange) and  \eqref{linPM2a} (dark red)
        behave similarly regardless of the value they are
        linearized around. However, we see that when we compare 
        the new perturbed method  A2s3p3m to the diagonally perturbed method D2s3p1m,
        the errors from the linearization  \eqref{linPM2b} are improved in terms of accuracy,  
        and  the \eqref{linPM2a} is far more stable.
        The loss of stability is consistent with the results in Section \ref{sec:Stability}, because in these
        cases we have a final time error growth of $O(\mathcal{K}_1 L \dt^2 T_f)$.
        If $\mathcal{K}_1 \approx L^2$ then $\mathcal{K}_1 L \dt^2$ will be very large if $\dt$ is not 
        sufficiently small.

        The linearization \eqref{linPM1} whether with $\bar{y} = u_n$ (dashed lines) and $\bar{y} = u^{(i-1)}$ (dotted lines)
        are not very stable for D2s3p1m (top left), and the $\bar{y} = u^{(i-1)}$ case is not very stable with 
        A2s3p3m (bottom left). However, matters improve dramatically for $\bar{y} = u_n$,
        in which case the errors are third order and even better than the unperturbed method 
        for small enough $\dt$.
        The Taylor series linearization \eqref{linPM3} performs at third order, as expected, in all the cases.
        
Figure  \ref{fig:PorousMedium_Standard} (right) shows the diagonally perturbed fourth order method
D3s4p1m \eqref{pSDIRK4} top right), and the new perturbed method B3s4p4m \eqref{eq:B3s4p4m} (bottom right).
The linearizations \eqref{linPM2a} and  \eqref{linPM2b} perform similarly to they did for the third order methods.
The linearization \eqref{linPM1} (blue) are slightly more stable for B3s4p4m, but not significantly.
However, for  the Taylor series linearization \eqref{linSW3}  we see that  in the  diagonally perturbed 
D3s4p1m method we get third order, regardless of the choice of $\bar{y}$. However, 
for the new perturbed method B3s4p4m (bottom) we see fourth order convergence when we 
linearize around $\bar{y} = u_n$ (dashed lines).
Once again, we see the potential  of the new perturbed methods  A2s3p3m \eqref{eq:A2s3p3m} and
B3s4p4m \eqref{eq:B3s4p4m} when used with a perturbation that satisfies the correct 
local consistency conditions.


\section{Conclusions}


The mixed-accuracy framework for Runge–Kutta methods introduced in \cite{Grant2022} 
offers a practical pathway for improving the efficiency of diagonally implicit Runge–Kutta (DIRK) 
methods by strategically replacing some of the  function evaluations $f$ with a more computationally 
efficient (but less accurate) variant $f_\varepsilon$.
These approximations introduce perturbations into the numerical method, which may be either smooth or 
nonsmooth in nature. While in the past we focused on nonsmooth perturbations,
particularly those arising from mixed-precision computations,  smooth perturbations are common and useful.
Examples of smooth perturbations include simplified models such as linearizations,
or under-resolved iterative solvers.

In this work we investigated the accuracy and stability properties of smoothly perturbed DIRK methods.
We develop a theory that relies on the smoothness of the perturbation error
$\tau = f_\varepsilon - f$, and use this theory to devise new methods that allow for efficient 
and accurate computations.  Our novel  perturbed DIRK methods are tailored to maintain high-order 
accuracy in the presence of smooth perturbations that satisfy additional local consistency conditions. 
The novel methods satisfy the usual accuracy conditions, so that they preserve the formal order, 
but also satisfy addition perturbation conditions that ensure  that the interaction between 
the perturbation and the time discretization does not degrade the overall solution quality.
Of particular interest in our investigation is a function $f_\varepsilon$ that arises from linearization
of $f$, and we show how the properties of the linearization match the local consistency conditions
we use to develop novel methods.

Finally, we present several  numerical experiments that further illustrate the impact of different types 
of smooth perturbations on both the newly developed SDIRK methods and existing methods from the literature. 
These results highlight the importance of accounting for perturbation structure in method design, 
and demonstrate that appropriately constructed methods can achieve improved robustness and efficiency 
in practical computations. Overall, this work provides both theoretical and computational evidence 
that mixed-accuracy strategies, when carefully analyzed and designed, can lead to high accuracy solutions.


\section*{\sc Author Contribution Statement} 


\noindent{\bf  John Driscoll} investigated numerous approaches and test cases.
He was primarily responsible for coding the numerical methods and replicating all the 
scalar results, and for checking the correctness and notation of the proofs.
JD reviewed the entire manuscript and  suggested many edits.

\noindent{\bf Sigal Gottlieb} was responsible for conceptualization of this project, 
With ZJG, She was primarily responsible for much of the stability analysis in Section \ref{sec:Stability}.
She worked with ZJG to understand and present the order conditions he developed in
Section \ref{sec:MADIRK}, suggested base methods to use for the development of novel
additive methods in Section \ref{sec:NovelMethods}, and 
determined numerical tests for Section \ref{sec:NumericalTests}.
SG was primarily responsible for writing and editing the manuscript.

\noindent{\bf Zachary J. Grant} was fully responsible for developing the smooth perturbation  framework for 
additive perturbed DIRK methods, and the related local consistency conditions and order conditions
in Section \ref{sec:MADIRK}. He was  responsible for finding all the novel methods
in Section \ref{sec:NovelMethods} and in designing the testing for Section \ref{sec:NumericalTests}.
He carefully proofread the entire paper,  and made numerous editorial suggestions 
to improve the presentation.

\noindent{\bf César Herrera}
was  involved in discussions on linearization and helped develop the 
understanding of these approaches. CH produced many numerical tests and graphs
for Section \ref{sec:NumericalTests}, and in particular is responsible for all
the work on the shallow water equations.
He read, commented, and edited the entire manuscript.

\noindent{\bf Tej Sai Kakumanu} was responsible for enhancing the understanding of
linearizations and suggesting approaches and numerical tests for 
Sections \ref{sec:NumericalTests}, and performing many of the numerical tests.
TSK   reviewed the entire manuscript and  suggested edits.

\noindent{\bf Monica Stephens} 
was responsible for the conceptualization of the problem and for the 
presentation of the work. She carefully reviewed and edited the entire manuscript for  
mathematical correctness and  made multiple editorial changes that contributed to the clarity of the 
manuscript.


\section*{\sc Funding Information}


The authors' research was supported in part by 
AFOSR Grant No. FA9550-23-1-0037,
NSF Grant No.  DMS-2309609,
and
DOE  Grant No. DE-SC0023164 Subaward RC114586.
SG acknowledges the support of  Mass Dartmouth’s Marine and Undersea Technology 
(MUST) Research Program funded by the ONR Grant No. N00014-20-1-2849. 
MS acknowledges support from the National Science Foundation PRIMES program under Grant No. DMS-2331890.
The authors acknowledge the Unity Cluster managed by the Research Computing 
\& Data team at the University of Massachusetts Amherst, and the UMassD shared 
cluster as part of the Unity cluster, supported by   AFOSR DURIP grant FA9550-22-1-0107.


\section*{Appendix} \addcontentsline{toc}{section}{Appendix}

\subsection*{A.1 Proofs of Section \ref{sec:Stability}} \label{app:proof}

In this subsection we generally assume that the ODE is scalar, however,
to remind us that this can easily be converted to a system we retain the notation 
$\left\| E_n \right\|$.

\smallskip

\begin{proof}  {\sc Lemma \ref{lem0}:}
The stability analysis follows from looking  at the inner product 
of the difference $E_{n} = z_{n} -y_{n}$ at each time-step:
\begin{eqnarray*}
\|E_{n+1}\|^2   &=&  \|E_n +
 \sum_{i=1}^s b_{i}  \psi^{(i)}  \|^2 = \| E_n\|^2 + 2  \sum_{i=1}^s b_{i} E_n^T \psi^{(i)}
 + \left( \vb \Psi ,\vb \Psi \right) \\
  &=&  \| E_n\|^2  + \left( \vb \Psi ,\vb \Psi \right)  
  + 2 \sum_{i=1}^s b_{i} (\psi^{(i)})^T \left( E^{(i)}  -  \sum_{j=1}^{i} a_{ij} \psi^{(j)} 
    + \dt \sum_{j=1}^i a^\varepsilon_{ij} \tau(y^{(j)})
    \right) \\
    &=&  \| E_n\|^2  - \left(  \Psi ,M \Psi \right) 
+ 2 \sum_{i=1}^s b_{i}  \left( \psi^{(i)} , E^{(i)} \right) 
 + 2 \dt \sum_{i=1}^s \sum_{j=1}^i b_{i}  a^\varepsilon_{ij}
 \left( \psi^{(i)} ,  \tau(y^{(j)}) \right)  \\
    &\leq &  \| E_n\|^2   + 2 \dt \; 
 \left\|  \vb^T  \Aep  \Psi   \tau(\vy)   \right\|
\end{eqnarray*}
The inequality follows from the fact that   $M$ is semi-positive definite by assumption, 
so that $( \Psi, M \Psi)  \geq 0$, and the positivity of $b_i$ combined with
the contractivity of $f$ that ensures that $  \left( \psi^{(i)} , E^{(i)} \right) \leq 0$.
\end{proof}

\begin{proof} {\sc Lemma \ref{lem1}:}
The positivity of  $a_{ii}$
and the contractivity of $f$ give
\begin{eqnarray*}
\left\| E^{(i)} \right\|^2 & \leq & \left(E^{(i)},E^{(i)}\right) -2a_{ii} \left( E^{(i)}, \psi^{(i)}\right)
+ a_{ii}^2\left( \psi^{(i)}, \psi^{(i)}\right) = \left\| E^{(i)} - a_{ii} \psi^{(i)} \right\|^2.
\end{eqnarray*}
If we let $\mAh$ be a diagonal matrix with  diagonal elements $a_{ii}$:
\begin{eqnarray*}
\|  \vE  \| & \leq  & \|  \vE -   \mAh \Psi \|
=  \left\| E_n \ve    +  \mA  \Psi - \dt  \Aep \tau(\vy) -   \mAh \Psi  \right\| 
\end{eqnarray*}
We now replace $  \Psi = \mA^{-1} \left( \vE -    E_n \ve + \dt \Aep \tau(\vy) \right) $
to get
\begin{eqnarray*}
\|  \vE  \| & \leq  & \left\|  E_n \ve +  (\mA- \mAh) \mA^{-1} \left(\vE -   E_n \ve 
+ \dt \Aep \tau(\vy) \right)   - \dt  \Aep \tau(\vy)  \right\|\\
& \leq  & \left\|   \mAh \mA^{-1}   E_n \ve     \right\| 
+ \left| \mI - \mAh  \mA^{-1} \right|  \left\|  \vE    \right\| 
+\dt  \left\|   \mAh \mA^{-1}  \Aep    \tau(\vy)    \right\|  
\end{eqnarray*}
Rearranging, we obtain the result. 
\end{proof}

\begin{proof} {\sc Theorem \ref{thm:stability}:}
We first bound $\left\|  \Psi \right\|$
\[ \left\|  \Psi \right\|   \leq   \dt L \left\| \vE \right\| 
\leq \dt L C_1  \|E_n\|  + \dt^2 L C_2  \left\|   \tau(\vy)    \right\|
\]
Now we want to bound the terms  $\left\|  \tau(\vy)   \right\|$. 
If $\tau(y_n) =0 $ but we may have  $\tau_u(y_n) \neq 0 $ then we  expand
\begin{eqnarray*}
\left\|  \tau(\vy )   \right\|  & \leq  & 
\left\| \tau_u(\zeta) \right\| \left\| \vy - y_n \right\| 
\leq M_1 \dt
\end{eqnarray*}
for some $M_1$.
If, we know that $\tau(y_n) =\tau_u(y_n) = 0 $ then the expansion becomes
\begin{eqnarray*}
\left\|  \tau(\vy)   \right\| & = & \left\|  \tau(y_n) + \tau_u(y_n) (\vy - y_n)  
+ \frac{1}{2} \tau_{uu}(\eta) (\vy - y_n)^2 \right\| 
 \leq M_2 \dt^2,
\end{eqnarray*}
for some $M_2$.

\begin{itemize}
\item For the case where $\tau_u(y_n) \neq 0$:
\begin{eqnarray*}
\|E_{n+1}\|^2   &\leq &  \| E_n\|^2   + 2 \dt \; \vb \left| \Aep \right| \ve
 \left\|  \Psi  \right\|   \left\|  \tau(\vy)   \right\| \\
  &\leq &  \| E_n\|^2   + 2 M_1 \dt^2 \vb \left| \Aep \right| \ve
  \left( 
   L C_1   \left\| E_n  \right\|  + \dt^2 L M_1 C_2 
  \right) \\
 & \leq &  \| E_n\|^2   +  2 \mathcal{K}_1 L \dt^3  \left\| E_n  \right\|
  + \tilde{K} L^2\dt^4 .
\end{eqnarray*}
This becomes
$ \|E_{n+1}\|^2   \leq  \left(\| E_n\|   + \mathcal{K}_1 L \dt^3\right)^2  
  + L^2 \dt^4 \left(\tilde{K}  - \mathcal{K}_1^2 \dt^2 \right). $
  For sufficiently large $\dt$ (such that $\tilde{K}  \leq  \mathcal{K}_1^2 \dt^2  $)
  we have $ \|E_{n+1}\|   \leq  \| E_n\|   + \mathcal{K}_1 L \dt^3 .$
\item For the case where $\tau_u(y_n) = 0$ we have 
\begin{eqnarray*}
\|E_{n+1}\|^2   &\leq &  \| E_n\|^2   + 2 \dt \;  \ \left| \vb^T \Aep \right| 
 \left\|  \Psi  \right\|   \left\|  \tau(\vy)   \right\| \\
 &\leq &  \| E_n\|^2   + 2 M_2 \dt^4 \; \vb \left| \Aep \right| \ve 
 \left(L C_1   \left\| E_n  \right\|  + \dt L C_2 \left\|   \tau(\vy)    \right\|
 \right) \\
 &\leq &  \| E_n\|^2   + 2 \mathcal{K}_2 L \dt^4  \left\| E_n  \right\|
 +  \hat{K} L^2 \dt^7 
 \end{eqnarray*}
\end{itemize}
Once again, by completing the square
\begin{eqnarray*}
\|E_{n+1}\|^2   &\leq & \| E_n\|^2   + 2 \mathcal{K}_2 L \dt^4  \left\| E_n  \right\| + 
\mathcal{K}_2^2 L^2 \dt^8 
 +  L^2  \dt^7 \left(\hat{K}   - \mathcal{K}_2^2  \dt \right),
 \end{eqnarray*}
 so that if $\dt$ is large enough so that $\hat{K}  \leq  \mathcal{K}_2^2  \dt $ we have
$ \|E_{n+1}\|   \leq    \| E_n\| +  \mathcal{K}_2 L \dt^4 .$
  \end{proof}

\newpage

  \subsection*{A.2 On Linearizations} \label{app:linearizations}

  Given  nonlinear PDEs of the form
\[\cU_t =  (\cU^m)_x \; \; \; \mbox{or}  \; \; \; \cU_t =  (\cU^m)_{xx}.\]
we can discretize in space using a differentiation matrix, to get an ODE of the form
\[u_t = f(u) = D_x(u^m) \; \; \; \mbox{or}  \; \; \; u_t = f(u) = D_{xx}(u^m) .\]
\begin{itemize}
    \item {\bf Linearization 1:} 
    A simple approach to linearization is  to replace $u^{m-1}$ with  $u_n^{m-1}$
    so that 
   \[f_\varepsilon(u_n,u) = D_x(U_n^{m-1} u ) \; \; \; \mbox{or}  \; \; \; 
   f_\varepsilon(u_n,u) = D_{xx}(U_n^{m-1} u ),\] 
   (the multiplication of two vectors here is understood componentwise as the Hadamard product).
   
   We can easily see that in both cases $f_\varepsilon(u_n,u_n) = f(u_n)$ so that condition \eqref{eq:Tau} is
   satisfied.
   However, we can also see that in both cases condition \eqref{eq:TauPrime} is not satisfied:
   \[ \frac{\partial f_\varepsilon}{\partial u} = D_x u_n^{m-1} 
   \; \; \; \mbox{or}  \; \; \;
   \frac{\partial f_\varepsilon}{\partial u} = D_{xx} u_n^{m-1},
   \]
   so that 
   \[ \tau_u (u_n) = \frac{\partial f_\varepsilon}{\partial u}(u_n) - 
   \frac{\partial f}{\partial u}(u_n) = D_x u_n^{m-1}  - D_x (m u_n^{m-1}) \neq 0,
   \]
   or 
   \[ \tau_u (u_n) = \frac{\partial f_\varepsilon}{\partial u}(u_n) - 
   \frac{\partial f}{\partial u}(u_n) = D_{xx} u_n^{m-1}  - D_{xx} (m u_n^{m-1}) \neq 0.
   \]
   \item {\bf Linearization 2:}  We can also rewrite the PDE in a different form and then linearize based on that form.
   For example, 
   \[ \cU_t =  (\cU^m)_x \; \; \; \Rightarrow \; \; \; \cU_t = m \cU^{m-1} \cU_x \; \; \; \Rightarrow \; \; \;
   u_t = f(u) = m u^{m-1}  D_x u  ,\]
   so that we can linearize 
   $ f_\varepsilon(u_n,u) = m U_n^{m-1}   D_x u.$

   Now, if we think of $f = D_x(u^m)$ then clearly we have a mismatch between $f$ and $f_\varepsilon$,
   because
   \[ \tau(u_n) = D_x(u_n^m) - m U_n^{m-1}   D_x u_n \neq 0. \]
   Recall that the differentiation operator does not satisfy the property
   $ D_x u^m = m u^{m-1} D_x u,$ but that this difference converges with $N_x$. This behavior was observed 
   in the numerical results.
   
   The second PDE $\cU_t =  (\cU^m)_{xx}$ can be modified, 
   and therefore linearized, in several ways:
   \begin{enumerate}
       \item If we differentiate once we get 
       \[ \cU_t = \frac{d}{dx} \left( m \cU^{m-1} \cU_x\right) \; \; \; \Rightarrow 
       u_t = f(u) = D_x \left( m u^{m-1} D_x u \right) 
       \] 
       so that we can linearize 
   $ f_\varepsilon(u_n,u) = D_x \left( m U_n^{m-1} D_x u \right) $.
       \item We can take this one step further and differentiate out the first term
       \[ \cU_t = m (m-1) \cU^{m-2} \left( \cU_x \right)^2 
       + m \cU^{m-1}  \cU_{xx}   \]
       which becomes the ODE 
       \[ u_t = f(u) = m (m-1) u^{m-2} \left( D_x u \right)^2 + m u^{m-1}  D_{xx} u.\]
       This can be linearized around $u_n$ 
       \[ f_\varepsilon(u_n,u) =  m (m-1) U_n^{m-2} \left( \left( D_x u_n \right) \odot \left(D_x u \right) \right)
       + m U_n^{m-1}  D_{xx} u. \]
   \end{enumerate}
    In all these cases, we may expect $\tau(u_n) = 0$, or at least that
    $ \tau(u_n) \rightarrow 0 $ as $N_x$ increases, but we do not expect $\tau_u(u_n) = 0$.
    \item {\bf Linearization 3:}  Finally, we consider a linearization based on a 
    Taylor expansion around $u_n$.
    Any $f(u) $ will be linearized as
    \[ f_\varepsilon(u_n, u)  = f(u_n) + f'(u_n) ( u - u_n) .\]
    In this case $ \tau(u) = f(u_n) + f'(u_n) ( u - u_n) - f(u) $ so that 
$ \tau_u(u) = f'(u_n) - f'(u).$ Clearly then  $ \tau_u(u_n) = f'(u_n) - f(u_n) =0 .$
Thus, this linearization satisfies both \eqref{eq:Tau} and \eqref{eq:TauPrime}, and so
is preferred as it simplifies the order conditions in Table \ref{tab:OC}. In fact,
this is a more accurate linearization and we will see that it generally performs 
better than the other alternatives.

For the PDEs considered here, the Taylor series linearization takes the form:
\[ f_\varepsilon(u_n,u) =  D_x(u_n^m) + D_x m U_n^{m-1} (u-u_n), \]
and 
\[ f_\varepsilon(u_n,u) =  D_{xx}(u_n^m) + D_{xx} m U_n^{m-1} (u-u_n), \]
where $U_n = diag(u_n)$.

\end{itemize}

  \subsection*{A.3 Fifth order method} \label{app:5thO}

  This six stage fifth order method is given by the coefficient $\tilde{\mA}$ and $\mA^\varepsilon$ where $\tilde{\mA} = \mA - \mA^\varepsilon$ and 
  \[ 
\begin{array}{ll}
 a_{21} = 0.27805384113645232493158618529853  &
 a_{22} = 0.27805384113645232493158618493986   \\
 a_{31} = 0.02563926406019955725750334911617  &
 a_{32} = -0.067907140638319686154545423375094  \\
a_{33} = 0.27805384113645232493158618493986 &
a_{41} = 0.46288174618101471080052219497850 \\ 
a_{42} = 0.15087176590423228508246469950504 & 
a_{43} = -0.27175112661133156378059421421552 \\
a_{44} = 0.27805384113645232493158618493986 &
a_{51} =  0.6654544604241820026463783076237 \\  
a_{52} = 7.0986011802448438993227520426319 &  
a_{53 } = -1.7957575925396292876887601404390 \\
a_{54} =  -5.0184428584790110818489484844777 &   
a_{55} =0.27805384113645232493158618493986 \\
a_{61} =  0.10731715308473725999947312385777 & 
a_{62} = 0.87998595709151287098426474073169 \\ 
a_{63} =0.19747904757470600132499354237387 &
a_{64} = -0.41841860591874017976596344143339 \\
a_{65} = -0.044417392968668277474354150469769 \; \; \; & 
a_{66} = 0.27805384113645232493158618493986 \\ 
 \end{array}  \]

\[ 
\begin{array}{ll} 
b_1 = 0.10731715308473725999947312385777 &
b_2 =    0.87998595709151287098426474073169\\
b_3 =    0.19747904757470600132499354237387&
b_4 =   -0.41841860591874017976596344143339\\
b_5 =   -0.044417392968668277474354150469769 \; \; \; &
b_6 =    0.27805384113645232493158618493986 \\
 \end{array} 
 \]
 
\[ 
\begin{array}{lll} 
\label{eq:B6s5p5m}
a^\varepsilon_{21} =  1.278053841136452  &
a^\varepsilon_{22 } = 0.278053841136452  & \\
a^\varepsilon_{31} =   -0.973638114602592  \; \; \; \; \; &
a^\varepsilon_{32} = -0.072668314073363   &
a^\varepsilon_{33} = 0.278053841136452   \\
a^\varepsilon_{41} =  1.462881746181012   &
a^\varepsilon_{42} = 0.818779639959430  &
a^\varepsilon_{43} = -1.271751126611331   \\
a^\varepsilon_{44} =  0.278053841136452    &  
a^\varepsilon_{51} =    0.079158039988964  &
a^\varepsilon_{52} = 8.098601175558391  \\
a^\varepsilon_{53} =  -0.795757633006629 &
a^\varepsilon_{54} = -4.026427771797058  \; \; \; & 
a^\varepsilon_{55}  = 0.278053841136452    \\     
a^\varepsilon_{61} =  -0.003644498479103  &
a^\varepsilon_{62} = 1.823599336039399   &
a^\varepsilon_{63} =1.194982506135020  \\
a^\varepsilon_{64} = -1.418418578492855  &
a^\varepsilon_{65} =0.011900225406260  &
a^\varepsilon_{66} =0.278053841136452 \\
 \end{array}  \]


\begin{thebibliography}{1}


\bibitem{broyden}
{\sc C.~Broyden}, {\em A class of methods for solving nonlinear simultaneous
  equations}, Mathematics of Computation, 19 (1965), pp.~577--593.

\bibitem{Burnett2}
{\sc B.~Burnett, S.~Gottlieb, and Z.~J. Grant}, {\em Stability analysis and
  performance evaluation of additive mixed-precision {R}unge-{K}utta methods},
  Commun. Appl. Math. Comput., 6 (2024), p.~705–738.

\bibitem{Burnett1}
{\sc B.~Burnett, S.~Gottlieb, Z.~J. Grant, and A.~Heryudono}, {\em Performance
  evaluation of mixed-precision {R}unge-{K}utta methods}, in 2021 IEEE High
  Performance Extreme Computing Conference (HPEC), 2021, pp.~1--6.

\bibitem{CrouzeixBstab}
{\sc M.~Crouzeix}, {\em Sur la b-stabilité des méthodes de {R}unge-{K}utta},
  Numerische Mathematik, 32 (1979), pp.~75--82.

\bibitem{Driscoll2026}
{\sc J. Driscoll, S. Gottlieb, Z.J. Grant, C. Herrera, T.S. Kakumanu,
M. H. Sawicki, and M. Stephens},
{\em Stable corrections for perturbed diagonally implicit Runge-Kutta methods}, 
Submitted (2026). 
\url{https://arxiv.org/abs/2603.24451}.

\bibitem{GithubSmoothPert}
{\sc J. Driscoll, S. Gottlieb, Z.J. Grant, C. Herrera, T.S. Kakumanu}
{\em Smooth Perturbations DIRK methods} (2026)
\url{https://github.com/Mixed-Precision/SmoothPerturbationsDIRKmethods}

\bibitem{Grant2022}
{\sc Z.~J. Grant}, {\em Perturbed  Runge-Kutta methods for mixed precision
  applications}, Journal of Scientific Computing, 92 (2022), pp.~1--20.

\bibitem{hairer1996solving2}
{\sc E.~Hairer and G.~Wanner}, {\em Solving Ordinary Differential Equations II:
  Stiff and Differential-Algebraic Problems}, vol.~14 of Springer Series in
  Computational Mathematics, Springer-Verlag, Berlin, Heidelberg, 2nd~ed.,
  1996.

 \bibitem{CarpenterRK}
 {\sc C.~A. Kennedy and M.~H. Carpenter}, {\em Diagonally implicit  Runge-Kutta
   methods for ordinary differential equations. a review}, NASA Technical
   Report, NASA/TM–2016–219173 (2016).

\bibitem{kurdi}
{\sc  M.A. Kurdi}, 
{\em Stable High Order Methods for Time Discretization of Stiﬀ Differential Equations}, 
Ph.D. Thesis, Applied Mathematics, Univ. of California, Berkeley, Berkeley (1974).

\bibitem{norsett}
{\sc S.~Nørsett}, {\em Semi explicit Runge-Kutta methods}, Math. and Comp.
  Rpt. 6/74 Dept. of Math., Univ. Trondheim,  (1974).

\end{thebibliography}

\end{document}